\documentclass[12pt]{amsart}
\usepackage{amsmath,amssymb,amsthm,amscd,verbatim}
\bibliographystyle{plain}
\usepackage{graphicx}
\usepackage{epsfig}
\usepackage{hyperref}

\setlength{\textwidth}{6.5in}     
\setlength{\oddsidemargin}{0in}   
\setlength{\evensidemargin}{0in}  
\setlength{\textheight}{8.5in}    
\setlength{\topmargin}{0in}       
\setlength{\headheight}{0in}      
\setlength{\headsep}{.3in}         
\setlength{\footskip}{.5in}       
\vfuzz2pt



\newtheorem{thm}{Theorem}
\newtheorem{lem}[thm]{Lemma}
\newtheorem{conj}[thm]{Conjecture}

\newtheorem{cor}[thm]{Corollary}
\newtheorem*{thm*}{Theorem}

\theoremstyle{definition}

\theoremstyle{remark}


\newcommand{\cR}{\mathcal{R}}
\newcommand{\cI}{\mathcal{I}}

\newcommand{\cS}{\mathcal{S}}
\newcommand{\cF}{\mathcal{F}}

\newcommand{\NOS}{\mathbb{N}}
\newcommand{\OS}{\mathbb{S}}

\newcommand{\bo}{\ensuremath{\mathrm{box}}}

\newcommand{\dist}{\ensuremath{\mathrm{dist}}}

\renewcommand{\le}{\leqslant}

\renewcommand{\ge}{\geqslant}


\begin{document}
\title{Box representations of embedded graphs}
\author{Louis Esperet} \address{Laboratoire G-SCOP (CNRS,
  Universit\'e Grenoble-Alpes), Grenoble, France}
\email{louis.esperet@grenoble-inp.fr}

\thanks{The author is partially supported by ANR Project STINT
  (\textsc{anr-13-bs02-0007}), and LabEx PERSYVAL-Lab
  (\textsc{anr-11-labx-0025}).}

\date{}
\sloppy
\begin{abstract}
A \emph{$d$-box} is the cartesian product of $d$ intervals of $\mathbb{R}$
and a \emph{$d$-box representation} of a graph $G$ is a representation
of $G$ as the intersection graph of a set of $d$-boxes in
$\mathbb{R}^d$. It was proved by Thomassen in 1986 that every planar
graph has a 3-box representation. In this paper we prove that every graph
embedded in a fixed orientable surface, without short non-contractible cycles,
has a 5-box representation. This directly implies that there is a function $f$,
such that in every graph of genus
$g$, a set of at most $f(g)$ vertices can be removed so that the resulting graph has a
5-box representation. We show that such a function $f$ can be made
linear in $g$. Finally, we prove that for any proper minor-closed class
$\cF$, there is a constant $c(\cF)$ such that every graph of $\cF$
without cycles of length less than $c(\cF)$ has a 3-box
representation, which is best possible.
\end{abstract}

\maketitle

\section{Introduction}

For $d\ge 1$, a \emph{$d$-box} is the cartesian product $I_1\times I_2
\times \cdots \times I_d$ of $d$ intervals of $\mathbb{R}$. A
\emph{$d$-box representation} of a graph $G=(V,E)$ is a collection
$\cR=(B_v)_{v \in
  V}$ of $d$-boxes in $\mathbb{R}^d$ such that any two boxes $B_u$ and
  $B_v$ intersect if and only if the corresponding vertices $u$ and
  $v$ are adjacent in $G$. In other words, $G$ is the intersection
  graph of the boxes $(B_v)_{v \in
  V}$. The \emph{boxicity} of a graph $G$, denoted by $\bo(G)$ and introduced by Roberts in 1969~\cite{Rob69}, is the smallest integer $d$
such that $G$ has a $d$-box representation.

It was proved by Thomassen in 1986 that planar graphs have boxicity
at most 3~\cite{Tho86}, which is best possible (as shown by the planar
graph
obtained from a complete graph on 6 vertices by removing a perfect
matching~\cite{Rob69}). It is natural to
investigate how this result on planar graphs extends to graphs
embeddable on surfaces of higher genus. Let $\bo(g)$ be the supremum
of the boxicity of all graphs embeddable in a surface of Euler genus
$g$. The result of Thomassen on planar graphs was extended
in~\cite{EJ13} by showing that for any $g\ge 0$, $\bo(g)\le 5g+3$ (prior to this result, it was not known whether
$\bo(g)$ was finite). In~\cite{Esp16}, we
proved the existence of two constants $c_1,c_2>0$, such that $c_1 \sqrt{g  \log g} \le
\bo(g) \le c_2\sqrt{g} \log g$. The proof of the upper bound relies on
a connection between boxicity and acyclic coloring established
in~\cite{EJ13}. It was noted there that using this connection and a
result of Kawarabayashi and Mohar~\cite{KM10}, it could be proved that there
is a function $f$ such that any graph embedded in a surface of Euler
genus $g$, such that all non-contractible cycles have length at least
$f(g)$, has boxicity at most 42. The main result of this paper is to
reduce this bound to 5 for orientable surfaces (Theorem~\ref{thm:locpla}). Similar ideas are
then used to show that toroidal graphs have boxicity at most 6, and
toroidal graphs without non-contractible triangles have boxicity at
most 5 (Theorem~\ref{thm:tor}). This improves on~\cite{EJ13}, where it was proved that
toroidal graphs have boxicity at most 7, while there are toroidal
graphs of boxicity 4.

An immediate consequence of Theorem~\ref{thm:locpla} is that if $G$
has genus $g$, then a set of at most $\sum_{i=1}^g f(i)$ vertices can be removed in
$G$ so that the resulting graph has boxicity at most 5. However, the
bound we obtain for $f(g)$ in Theorem~\ref{thm:locpla} is exponential in $g$. We show the
following improvement: if $G$ is embedded in a surface of Euler
genus $g>0$, then a set of at most $60g-30$ vertices can be removed in
$G$ so that the resulting graph has boxicity at most 5
(Theorem~\ref{thm:linex1}). Note that this result is proved for any
surface, orientable or not.

In~\cite{EJ13}, it was proved that there is a function $c$ such
that if $G$ is embedded in a surface of Euler
genus $g$, and has no cycle of length less than $c(g)$, then $G$ has
boxicity at most 4. Here, we show there is a function $c$ such
that if $G$ has no $K_t$-minor and no cycle of length less than $c(t)$,
then $G$ has boxicity at most 3 (Corollary~\ref{cor:minor}). This is
best possible already for $t=6$. This result follows from a more
general theorem on path-degenerate graphs
(Theorem~\ref{thm:6path}). This general result also implies, together
with earlier results from~\cite{GGH01}, that there is a constant $c$
such that any graph embeddable in a surface of Euler genus $g$, with
no cycle of length less than $c \log g$, has boxicity at
most 3 (Corollary~\ref{cor:girth}). This is best possible up to the choice of the constant $c$.

\medskip

Some of the results we will use originate from the proof of
Thomassen~\cite{Tho86} that planar graphs have boxicity at most 3, without being explicitly stated there. In
Section~\ref{sec:pla}, we explain how these results can be derived
from~\cite{Tho86}, which might be of independent interest.
We will then prove the main results of this paper in Sections~\ref{sec:locpla},
\ref{sec:linext}, and~\ref{sec:girth}. In the remainder of this section, we review the necessary
background on boxicity and graphs on surfaces. We then give a simple
proof that embedded graphs of large edge-width have boxicity at most
7 (this will be improved to 5 in Section~\ref{sec:locpla}).

\medskip

\subsection*{Boxicity}

Let $G=(V,E)$ be a graph, and let $\cR=(B_v)_{v \in V}$ be a $d$-box
representation of $G$. For a $d$-box $B_v=I_1\times I_2 \times \cdots
\times I_d$, and an integer $1\le i \le d$, we refer to $I_i$ as \emph{the $i$-th interval of $v$}. For $1\le i \le
d$, let $\cI_i$ be the interval representation consisting of all $i$-th 
intervals of the vertices of $G$ in $\cR$. Each interval
representation $\cI_i$, $1\le i \le d$, corresponds to an interval
graph $G_i$ with vertex-set $V$. Observe that each graph $G_i$ is a
supergraph of $G$, and any two $d$-boxes
$B_u$ and $B_v$ intersect if and only if for all $1 \le i \le d$ the
intervals corresponding to $u$ and $v$ intersect in $\cI_i$, or
equivalently, if the
vertices $u$ and $v$ are adjacent in the interval graph $G_i$. 

For two graphs $G_1=(V,E_1)$ and $G_2=(V,E_2)$ on the same vertex-set
$V$, the \emph{intersection $G_1\cap G_2$} of $G_1$ and $G_2$ is
defined as the graph $G=(V,E_1 \cap E_2)$. The discussion above implies
that the boxicity of a graph $G$ can be equivalently defined as the
least $k$ such that $G$ can be expressed as the intersection of $k$
interval graphs on the same vertex-set. In this paper, the two definitions will be used and
we will often switch from one to the other, depending of the
situation, i.e whenever we consider some $d$-box representation
$\cR=(B_v)_{v \in V}$ we will implicitly consider it as a
representation $\cR=(\cI_1,\cI_2,\ldots,\cI_d)$ as defined in the
previous paragraph, and vice-versa. If $\cR$ is a representation of $G$, we will
often say that $\cR$ \emph{represents} $G$, or \emph{induces} $G$.

\subsection*{Graphs on surfaces}

We refer the reader to the book by Mohar
and Thomassen~\cite{MoTh} for more details or any notion not defined
here. All the graphs in this paper are simple (i.e., without loops and
multiple edges). A {\em surface} is a non-null compact connected
2-manifold without boundary. A surface can be orientable or non-orientable. The \emph{orientable
  surface~$\OS_h$ of genus~$h$} is obtained by adding $h\ge0$
\emph{handles} to the sphere; while the \emph{non-orientable
  surface~$\NOS_k$ of genus~$k$} is formed by adding $k\ge1$
\emph{cross-caps} to the sphere. The {\em Euler genus} of a surface
$\Sigma$ is defined as twice its genus if $\Sigma$ is orientable, and as
its non-orientable genus otherwise.

We say that an embedding is \emph{cellular} if every face is
homeomorphic to an open disk of~$\mathbb{R}^2$. Using the fact that the
boxicity of a graph $G$ is at most $k$ if and only if all the
connected components of $G$ have boxicity at most $k$, we will always
be able to assume in this paper (using~\cite[Propositions 3.4.1 and
3.4.2]{MoTh}) that the considered embeddings are cellular (and we will do
so implicitly).

Let $G$ be a graph embedded in a surface of Euler genus $g>0$. The
\emph{edge-width} of $G$ is defined as the length of a smallest
non-contractible cycle of $G$, and the \emph{face-width} of $G$ is defined
as the minimum number of points of $G$ intersected by a
non-contractible curve on the surface.
Given a 2-sided cycle $C$ in $G$, and an orientation of $C$, the set
of neighbors of $C$ incident to an edge leaving the left side of $C$
is denoted by $L(C)$, and the set
of neighbors of $C$ incident to an edge leaving the right side of $C$
is denoted by $R(C)$ (see Figure~\ref{fig:lrc}, left, for an
example in a triangulation). Note that $L(C)$ and $R(C)$ might not be
disjoint. Given two non-contractible cycles $C_1$ and $C_2$,
$\dist(C_1,C_2)$ denotes the minimum distance between a vertex of
$C_1$ and a vertex of $C_2$ in $G$. If $C$ is a non-contractible
2-sided cycle, we also define $\dist(C,C)$ as the minimum length
(number of edges) of a path starting with an edge incident to $C$ and
$L(C)$, and ending with an edge incident to $C$ and $R(C)$ (note that
$\dist(C,C)$ does not depend of the chosen orientation of $C$).

\begin{figure}[htbp]
\begin{center}
\includegraphics[scale=1.4]{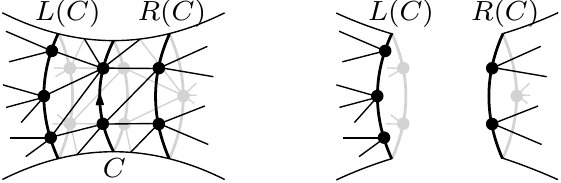}
\caption{The sets $L(C)$ and $R(C)$ in a triangulation (left) and the
  embedded graph resulting from the removal of $C$ (right).\label{fig:lrc}}
\end{center}
\end{figure}

Let $G$ be a triangulation of $\OS_g$. 
A collection of pairwise disjoint non-contractible cycles $C_1,\ldots,C_g$ of $G$ is \emph{planarizing}
if the graph obtained from $G$ by removing the vertices of
$C_1,\ldots,C_g$ is planar. If for any $i\le j$, $\dist(C_i,C_j)\ge d$, then the planarizing collection $C_1,\ldots,C_g$ is said to have
\emph{minimum distance} at least $d$.

The following was proved by Thomassen~\cite{Tho93}:

\begin{thm}{\cite{Tho93}}\label{thm:placc}
Let $d,g$ be integers, and let $G$ be a triangulation of $\OS_g$ of
edge-width at least $8(d+1)(2^g-1)$. Then $G$ contains a planarizing
collection of induced cycles with mininum distance at least $d$.
\end{thm}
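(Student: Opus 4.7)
The plan is to proceed by induction on $g$. The base case $g=0$ is vacuous: $\OS_0$ is the sphere, the bound $8(d+1)(2^0-1)$ is zero, and the empty collection planarizes. The form $8(d+1)(2^g-1)$ of the bound, which essentially doubles when $g$ grows by one, signals the classical peeling strategy: at each step, extract one short non-contractible induced cycle $C_g$, surgically reduce to a triangulation on a surface of smaller Euler genus with roughly half the edge-width, and iterate.

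For the inductive step, I would take $C_g$ to be a shortest non-contractible cycle of $G$, chosen non-separating (possible in the orientable setting by a minimality argument, since cutting along a shortest separating non-contractible cycle would leave two subsurfaces of positive genus each carrying strictly shorter non-contractible cycles of $G$). Such $C_g$ is automatically induced: any chord would split $C_g$ into two strictly shorter cycles whose sum in $\mathbb{Z}/2$-homology equals the class of $C_g$, so at least one would be non-contractible, contradicting minimality. I then cut $\OS_g$ along $C_g$, cap each of the two resulting boundary circles by a disk, place one new apex vertex in each disk, join it to all vertices that were adjacent to $C_g$ from the corresponding side, and triangulate. This produces a triangulation $G'$ of $\OS_{g-1}$.

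The heart of the argument, and the main technical obstacle, is the edge-width estimate $\ew(G') \ge 8(d+1)(2^{g-1}-1)$. Let $C'$ be a non-contractible cycle of $G'$. If $C'$ avoids both apex vertices then $C'$ is a cycle of $G$; because $C_g$ is non-separating, any disk bounded by $C'$ in $\OS_g$ would lift to a disk in $\OS_{g-1}$, so $C'$ is non-contractible in $\OS_g$ and $|C'|\ge\ew(G)$. If $C'$ passes through an apex vertex, each apex passage can be rerouted along $C_g$ by a detour of length at most $\tfrac12|C_g|+2$, producing a closed walk in $G$ containing a non-contractible cycle; using $|C_g|=\ew(G)$ one then deduces $|C'| \ge \tfrac12\ew(G)$. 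In either case $|C'| \ge \tfrac12\ew(G) \ge 4(d+1)(2^g-1) \ge 8(d+1)(2^{g-1}-1)$, as needed. The factor-of-two loss here is precisely what forces the $2^g$ in the final bound, and the additive $8(d+1)$ is the slack required for the distance condition.

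Applying the inductive hypothesis to $G'$ yields induced cycles $C_1,\ldots,C_{g-1}$ with $\dist_{G'}(C_i,C_j)\ge d$ for all $i<j$. A short edge-width argument forces these cycles to avoid the two apex vertices (a cycle using one of them would be much shorter than $\ew(G')$), so they lie in $G-V(C_g)$; pairwise distances in $G'$ transfer to pairwise distances in $G$, and the $8(d+1)$ additive slack in the edge-width bound guarantees $\dist_G(C_i,C_g)\ge d$ as well. Finally, since $G'$ with $V(C_1)\cup\cdots\cup V(C_{g-1})$ removed is planar, so is $G$ with $V(C_1)\cup\cdots\cup V(C_g)$ removed, completing the induction. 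Everything apart from Step 3 is bookkeeping; the edge-width estimate for $G'$ is the real work.
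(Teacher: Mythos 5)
The paper does not prove this statement; it imports it from Thomassen's paper \cite{Tho93}, whose argument does have the overall shape you describe (induction on $g$, cutting along a short non-contractible cycle, capping, and a factor-two loss in edge-width --- which is exactly where the $2^g$ comes from). But two of your steps fail as written. First, the reduction to a non-separating $C_g$: your justification --- that a shortest \emph{separating} non-contractible cycle would leave two subsurfaces each carrying \emph{strictly shorter} non-contractible cycles --- is simply false. Picture a genus-$2$ ``dumbbell'' whose connecting neck has a meridian of length exactly the edge-width while every non-contractible cycle inside either lobe is enormous: the unique shortest non-contractible cycle is separating. The separating case has to be handled on its own (cut, cap both sides, induct on the two pieces of genera $g_1+g_2=g$), and even then the distance condition between cycles coming from different pieces is not automatic. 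A smaller problem of the same flavour: your rerouting bound $|C'|\ge\tfrac12\,\mathrm{ew}(G)$ is derived for a cycle through \emph{one} apex; a cycle through both apexes incurs two detours of total length about $|C_g|$ and your inequality degenerates to $|C'|\ge 0$. (It can be rescued --- such a cycle contains two disjoint $L(C_g)$--$R(C_g)$ paths, each long because it closes up with half of $C_g$ into a cycle crossing $C_g$ an odd number of times --- but this needs to be said.)

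The more serious gap is the distance bookkeeping, which you dismiss as ``slack''. The inductive hypothesis applied to $G'$ gives no control whatsoever on where $C_1,\dots,C_{g-1}$ sit relative to the two apex vertices: an induced non-contractible cycle of $G'$ may pass through, or right next to, an apex (your remark that such a cycle ``would be much shorter than the edge-width'' is not an argument --- there is no upper bound on the length of the inductive cycles, and length is irrelevant to their position). So $\mathrm{dist}_G(C_i,C_g)$ could be $1$. Worse, ``pairwise distances in $G'$ transfer to pairwise distances in $G$'' is false in the needed direction: a $G$-path between $C_i$ and $C_j$ may run through $V(C_g)$, which does not exist in $G'$, so $\mathrm{dist}_G(C_i,C_j)$ can be far smaller than $\mathrm{dist}_{G'}(C_i,C_j)$ (e.g.\ $C_i$ hugging the left side of $C_g$ and $C_j$ the right side). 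The missing idea --- and the reason the hypothesis carries a factor $(d+1)$ rather than an absolute constant --- is to excise an entire collar of radius on the order of $d$ around $C_g$ (on both sides) before capping and inducting, so that everything the induction returns is automatically at distance more than $d$ from $C_g$ and cannot be shortcut through it. Showing that the boundary of such a collar contains an induced cycle homotopic to $C_g$, and that the edge-width survives the excision, is the actual technical content of Thomassen's proof; without it the induction does not close.
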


\subsection*{Triangulations of large edge-width have boxicity at most 7}

To give a taste of the proofs of the main results is this paper, we
now prove that \emph{if $G=(V,E)$ is a triangulation of $\OS_g$, $g\ge
1$, with edge-width at  least $40(2^g-1)$, then $G$ has boxicity at
most 7.}

\smallskip

Let $C_1,\ldots,C_g$ be a planarizing
collection of induced cycles with mininum distance at least 4 (whose
existence follows from Theorem~\ref{thm:placc}) in $G$. We denote by
$C$ the set of vertices lying in one of the $C_i$'s. We also denote by
$N$ the set of vertices not in $C$ with at least one neighbor in $C$,
and set $R=V\setminus (C\cup N)$. 

Since the collection
$C_1,\ldots,C_g$ is planarizing, the set $V\setminus C=N\cup R$
induces a planar graph and therefore has a 3-box representation
$(\cI_1,\cI_2, \cI_3)$ by the
result of Thomassen~\cite{Tho86}. We extend this representation to $C$ by
mapping all the vertices of $C$ to a big 3-box containing all the
other 3-boxes of the representation. Since the distance in $G$ between any two
cycles $C_i,C_j$ is at least 4, there is no edge between a neighbor
of $C_i$ and a neighbor of $C_j$. As a consequence, the set $C\cup N$
induces a disjoint union of $g$ planar graphs (and is therefore
planar). Let $(\cI_4,\cI_5, \cI_6)$ be a 3-box representation of this
graph. We extend this representation to $R$ by
mapping all the vertices of $R$ to a big 3-box containing all the
other 3-boxes of the representation $(\cI_4,\cI_5, \cI_6)$. Finally,
let $\cI_7$ be the interval graph mapping all the vertices of $C$ to
$\{0\}$, all the vertices of $N$ to $[0,1]$, and all the vertices of
$R$ to $\{1\}$. We now prove that $\cR=(\cI_1,\cI_2,\ldots,\cI_7)$
induces $G$. Each interval graph $\cI_i$ is clearly a supergraph of
$G$, so we only need to show that every pair of non-adjacent vertices
$u,v$ in $G$ is non-adjacent in at least one of the $\cI_i$'s. This
follows from the definition of $(\cI_1,\cI_2, \cI_3)$ if $u,v\in N\cup
R$, and from the definition of $(\cI_4,\cI_5, \cI_6)$ if $u,v\in C\cup
N$. Finally if $u\in C$ and $v\in R$ then they are non-adjacent in
$\cI_7$, as desired.\hfill $\Box$

\medskip

In Section~\ref{sec:locpla} we will show that this argument
can be applied to any embedded graph (not only triangulations) and
that the bound on the boxicity can be
improved to 5. The idea will be to group $\cI_3$, $\cI_6$, and $\cI_7$
in a single interval graph. But for this we first need to see
how much flexibility we have in choosing 3-box representations for
planar graphs, which
is the topic of the next section.

\section{Planar graphs}\label{sec:pla}

Recall that Thomassen~\cite{Tho86} proved that planar graphs have
boxicity at most 3. He actually proved significantly stronger results,
which we will need here. 

A $d$-box is \emph{non-degenerate} if it is the cartesian product of $d$
intervals of positive length. A representation of a graph $G$ as the
intersection of $d$-boxes is \emph{strict} if (1) the boxes are
non-degenerate, (2) the interiors of the
boxes are pairwise disjoint, and (3) the intersection of any two boxes is
a non-degenerate $(d-1)$-box.

\begin{thm}{\cite{Tho86}}\label{thm:plasep} A graph $G$ has a strict
  2-box representation if and only if $G$ is a proper subgraph of a
  4-connected planar triangulation.
\end{thm}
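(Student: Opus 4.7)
The plan is to prove the two directions separately, using the correspondence between strict 2-box representations of a graph and rectangular subdivisions of the plane in which only T-intersections occur.

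For the forward direction, suppose $G$ has a strict 2-box representation $\cR=(B_v)_{v \in V}$. The first step is a \emph{T-corner lemma}: any three boxes $B_u, B_v, B_w$ with pairwise disjoint interiors and pairwise non-degenerate segment intersections share a single common point where all three boundaries meet; this follows from a short case analysis on whether two intersections on a common box lie on the same or on different sides of its boundary. Now enclose $\cR$ in a large bounding box $B^*$ with positive-width margin on each side, and subdivide the complementary region $B^* \setminus \bigcup_v B_v$ into additional axis-aligned rectangles meeting the existing boxes and one another only along non-degenerate segments; this is always possible since the complement is a rectilinear polygonal region. Adding four frame vertices $n, e, s, w$ for the four sides of $B^*$ (each joined to every rectangle meeting the corresponding side in a segment of positive length), plus one further vertex $\infty$ adjacent to $n,e,s,w$ closing the outer face, yields a plane graph $T$ strictly containing $G$. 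A standard Euler formula count shows $T$ is a triangulation, and by the T-corner lemma applied to the augmented arrangement, every 3-cycle of $T$ bounds a face. Hence $T$ has no separating triangle, which together with $|V(T)| \geq 5$ yields 4-connectivity.

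For the converse, suppose $G$ is a proper subgraph of a 4-connected planar triangulation $T$. I would first reduce to the case where $G$ is an induced subgraph of a plane graph $T'$ that is internally triangulated, 3-connected, free of separating triangles, and has a 4-cycle as outer face whose vertices do not lie in $V(G)$. This is achieved by extending $T$ via a small number of carefully placed new vertices (enough to produce four vertices outside $V(G)$ and to route every non-$G$ edge between vertices of $V(G)$ through one of them) together with a consistent re-triangulation; the 4-connectivity of $T$ ensures that the resulting $T'$ still has no separating triangle. The main tool is then the classical rectangular-dual theorem: under these hypotheses, $T'$ can be realized as the contact graph of a subdivision of an axis-aligned rectangle $B^*$ into smaller interior-disjoint rectangles meeting only along non-degenerate segments, with the four sides of $B^*$ corresponding to the outer-face vertices, and I would prove it by induction on the number of internal vertices via a canonical corner-vertex reduction. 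Restricting the resulting family to $V(G)$ then gives the required strict 2-box representation of $G$.

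The main technical hurdle will be the T-corner lemma in the forward direction: the case analysis must check that in both sub-cases (two intersections on the same side or on different sides of a common box) the position of the third box is forced enough to yield the common T-corner. The converse is a comparatively standard application of the rectangular-dual theorem, once the reduction to an induced subgraph of a good plane graph has been performed.
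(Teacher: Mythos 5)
The paper offers no proof of this statement: it is imported verbatim from Thomassen's 1986 paper \cite{Tho86}, so there is no in-house argument to compare yours against. On its own terms, your route --- identifying strict 2-box representations with rectangular subdivisions having only T-junctions and invoking the rectangular-dual correspondence --- is the natural one and essentially the classical one. The forward direction is sound in outline: your T-corner lemma is true (and strictness already forbids cross-junctions among the given boxes, since two diagonally placed boxes at such a junction would meet in a single point, violating condition (3)). Two of its steps are asserted rather than proved, though: that the complement of the boxes in a bounding box can \emph{always} be partitioned into rectangles meeting everything only in non-degenerate segments requires an argument (you must avoid creating new point contacts and cross-junctions), and ``a standard Euler formula count'' does not by itself show $T$ is a triangulation --- what you actually need is the local statement that at every interior junction of the completed subdivision exactly three rectangles meet, which is what makes every face a triangle.

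The converse contains a step that fails as written. You propose to obtain $T'$ by ``extending $T$ via a small number of carefully placed new vertices'' and claim that 4-connectivity of $T$ ensures $T'$ is free of separating triangles. But $T$ is a triangulation, so every face of $T$ is bounded by a triangle; if $T'$ contains $T$ intact together with at least one new vertex, that vertex lies inside some face of $T$, and since $|V(T)|\ge 5$ the triangle bounding that face separates the new vertex from the rest of $T$ in $T'$. Hence no planar graph containing the intact triangulation $T$ plus new vertices can be free of separating triangles, and in particular an outer 4-cycle of vertices disjoint from $V(G)$ cannot in general be arranged. The repair is to use the hypothesis that $G$ is a \emph{proper} subgraph in an essential, and prior, way: first delete from $T$ a vertex or an edge not belonging to $G$ (one exists by properness), which opens up a face of length at least 4; re-embed with that face as the outer face (its vertices may well lie in $V(G)$ --- that is harmless, since opposite sides of the bounding rectangle do not touch and, by 4-connectivity, the new outer cycle is chordless); only then subdivide the remaining edges of $T$ joining vertices of $V(G)$ that are absent from $G$ (no separating triangle arises here because a 4-connected planar triangulation on at least 5 vertices contains no $K_4$, so the two apices of a subdivided edge are non-adjacent); and finally apply the rectangular-dual theorem and discard the rectangles of the vertices outside $V(G)$. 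With the order of operations corrected in this way, your strategy goes through.
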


Let $G$ be a graph embedded in some surface. A triangle of $G$ is said
to be \emph{facial} if it bounds a face of $G$.
We will use the following immediate corollary of Theorem~\ref{thm:plasep}:

\begin{cor}\label{cor:plasep} 
  Let $G$ be a plane graph such that all the triangles of $G$ are
  facial. If $G$ is not a triangulation, then $G$ has a strict 2-box representation.
\end{cor}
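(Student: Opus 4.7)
The plan is to apply Theorem~\ref{thm:plasep} directly: it suffices to exhibit a $4$-connected planar triangulation $T$ that contains $G$ as a proper subgraph. For the small cases, if $|V(G)| \leq 4$ then since $G$ is not a triangulation, $G$ is a proper subgraph of $K_4$, hence $3$-colorable with color classes of size at most $2$, which yields $G$ as a subgraph of the octahedron $K_{2,2,2}$, a $4$-connected planar triangulation on $6$ vertices. I therefore assume $|V(G)| \geq 5$.

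If $G$ is not $2$-connected, I would first add a minimal collection of edges (and possibly auxiliary vertices) inside the faces of $G$ to make it $2$-connected, arranged so that no new triangle is created: for a disconnected $G$ this amounts to joining distinct components by a single edge drawn in a common face, and for $G$ with cut vertices it amounts to joining distinct branches around each cut vertex by edges whose endpoints have no common neighbor in $G$. After this preprocessing, every face boundary is a simple cycle, every triangle of $G$ is still facial, and $G$ is still not a triangulation; I relabel the resulting plane graph as $G$.

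Next, in each non-triangular face $f$ of $G$ with boundary cycle $u_1 u_2 \cdots u_k$ ($k \geq 4$), I add $k+1$ new vertices $v_1, \ldots, v_k, c_f$ inside $f$ together with the following edges: the cycle edges $v_i v_{i+1}$; the spokes $v_i u_i$ and the diagonals $v_i u_{i+1}$, which triangulate the annular region between $\partial f$ and the $v$-cycle into the triangles $u_i v_i u_{i+1}$ and $v_i u_{i+1} v_{i+1}$; and the $k$ edges $c_f v_i$, which triangulate the inner disk into the triangles $c_f v_i v_{i+1}$. Call the resulting plane graph $T$: by construction $T$ is a planar triangulation containing $G$ as a proper subgraph, with $|V(T)| \geq 5$.

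The heart of the argument is to check that $T$ has no separating triangle, which together with $|V(T)| \geq 5$ implies $T$ is $4$-connected (a standard fact for planar triangulations). Any triangle $\Delta$ of $T$ whose three vertices all lie in $V(G)$ is a triangle of $G$, hence facial in $G$ by hypothesis; since nothing is added inside the triangular faces of $G$, $\Delta$ remains facial in $T$. Otherwise $\Delta$ contains a new vertex $w$ lying inside some face $f$ of $G$, and since new vertices in distinct faces are pairwise nonadjacent, all three vertices of $\Delta$ lie in the local construction inside $f$; a short case analysis using the specific adjacencies of the construction identifies $\Delta$ with one of its facial triangles. I expect the main obstacle to be the case where $\Delta = u_i u_j w$ with $w$ a new vertex in $f$ and $u_i u_j$ a chord of $\partial f$ drawn outside $f$ (that is, $u_i, u_j \in \partial f$ are not consecutive on $\partial f$, yet $u_i u_j \in E(G)$); the construction is tuned precisely to rule this out, since each $v_l$ is adjacent only to the two consecutive boundary vertices $u_l, u_{l+1}$, and $c_f$ is adjacent to no $u_i$, so no new vertex is simultaneously adjacent to two non-consecutive boundary vertices of $f$. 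Hence $T$ is $4$-connected, and Theorem~\ref{thm:plasep} delivers the desired strict $2$-box representation of $G$.
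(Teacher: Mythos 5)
Your argument is correct and is essentially the paper's own proof: both insert, inside each non-triangular face, an inner cycle whose vertices are each joined to two consecutive boundary vertices (you triangulate the inner disk with a hub $c_f$ where the paper uses a fan from one vertex of $C_f$, an immaterial difference), then observe that since every triangle of $G$ is facial and no new vertex sees two non-consecutive boundary vertices, the resulting triangulation has no separating triangle and at least $5$ vertices, hence is $4$-connected, so Theorem~\ref{thm:plasep} applies. Your extra care with the $|V(G)|\le 4$ case and the $2$-connectivity preprocessing (which the paper compresses into ``this can be done in such a way that $H$ is a simple triangulation'') is welcome but does not change the method.
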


\begin{proof}
  Let $H$ be the graph obtained from $G$ by doing the following, for
  every non-triangular face $f$ of $G$: add a cycle $C_f$ of length
  $d(f)$ (the number of edges in a boundary walk of $f$) inside $f$,
  join one vertex of $C_f$ to each of the other vertices of $C_f$
  (with the newly added edges staying inside the region bounded by
  $C_f$), and then join each vertex of $C_f$ to two consecutive
  vertices in a boundary walk of $f$. This can be done in such a way
  that $H$ is a (simple) triangulation. Since all the triangles of $G$
  are facial, $H$ is a triangulation without separating
  triangle. Observe that $H$ contains at least 5 vertices, so $H$ is a
  4-connected triangulation. The result now follows directly from
  Theorem~\ref{thm:plasep}.
\end{proof}

Consider a strict 3-box representation $(B_v)_{v\in V}$ of a planar
graph $G=(V,E)$, and let $uvw$ be a triangle of $G$. Then $uvw$ has
an \emph{empty inner corner} in the representation if there is a small 3-box
$C$, whose interior is disjoint from the interior of all the boxes
$(B_v)_{v\in V}$, such that the following holds: there is a corner $c$
of $C$, and three faces $f_u,f_v,f_w$ of $C$ containing $c$, such that
$C\cap B_u=f_u$, $C\cap B_v=f_v$, and $C\cap B_w=f_w$ (see
Figure~\ref{fig:corner}). In particular
$c=f_u\cap f_v \cap f_w \in B_u\cap B_v \cap B_w$.  Note that if a
triangle has an empty inner corner, then the corresponding 3-box $C$
defined above can be made arbitrarily small.

\begin{figure}[htbp]
\begin{center}
\includegraphics[scale=1.2]{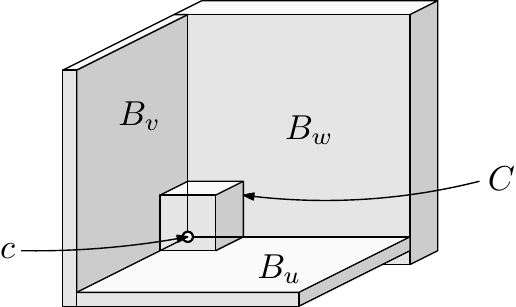}
\caption{An empty inner corner in a strict 3-box representation. \label{fig:corner}}
\end{center}
\end{figure}

\medskip

We say that three intervals $I_1,I_2,I_3$ are \emph{strictly
  overlapping} if $I_1 \cap I_2 \cap I_3$ is an interval of positive
length, and none of the three intervals is contained in another one of
the three.

 The following
simple lemma about empty inner corners will be particularly useful.

\begin{lem}\label{lem:corner}
  Let $(\cI_1, \cI_2, \cI_3)$ be a strict 3-box representation of a
  planar graph $G$ such that $(\cI_1, \cI_2)$ is a strict 2-box
  representation of $G$. Then any
  triangle $uvw$ of $G$ such that the intervals of $u,v,w$ are strictly
  overlapping in $\cI_3$ has an empty inner corner in
  $(\cI_1, \cI_2, \cI_3)$.
\end{lem}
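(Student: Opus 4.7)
The plan is to analyze the 2D structure of the triangle under the strict 2-box representation $(\cI_1,\cI_2)$, and then use the extra room left in $\cI_3$ when one of the three intervals does not reach as far as the others. For convenience, write each 3-box as $B_x = R_x \times I_x$, where $R_x$ is the 2-box of $x$ in $(\cI_1,\cI_2)$ and $I_x$ is its $\cI_3$-interval.

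First, I would show that the three rectangles $R_u, R_v, R_w$ share a unique common point $p$, and that $p$ is a ``T-junction'': a corner of exactly two of them and an interior edge point of the third (the \emph{capping} rectangle). Helly's theorem in the plane (or a short direct argument) provides a common point; a case analysis of how the boundary segments $R_u\cap R_v$, $R_u\cap R_w$, $R_v\cap R_w$ sit on the axis-aligned rectangle boundaries then rules out every configuration other than the T-junction, since the alternatives either force two interiors to overlap or force some pairwise intersection to be a single point instead of a non-degenerate segment. Near $p$, the three rectangles partition the four quadrants: the capping rectangle covers two adjacent quadrants and the other two each cover exactly one of the remaining quadrants. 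By the strict property, a sufficiently small neighborhood of $p$ intersected with any single quadrant lies in the interior of exactly one of $R_u, R_v, R_w$, and is disjoint from the interior of every other 2-box of the representation.

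Next, I would exploit the strictly overlapping condition on $I_u, I_v, I_w$. After relabeling $u, v, w$ we may assume $\alpha_u \le \alpha_v \le \alpha_w$ for the left endpoints of the intervals; the non-nesting condition then forces $\beta_u \le \beta_v \le \beta_w$ for the right endpoints, and positive common length gives $\alpha_w < \beta_u$. Hence a $z$-value slightly above $\beta_u$ lies in $I_v \cap I_w$ but not in $I_u$, and symmetrically a $z$-value slightly below $\alpha_w$ lies in $I_u \cap I_v$ but not in $I_w$. Since only one vertex is the capping vertex in the T-junction, at least one of $u$ and $w$ is a base vertex; up to the symmetry $u \leftrightarrow w$ combined with reversing $\cI_3$, we may assume that $u$ is a base vertex.

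Finally, I would take $C$ to be the product of a small rectangle having $p$ as a corner and contained in the unique quadrant of $R_u$ near $p$, with the interval $[\beta_u,\beta_u+\delta]$ for some small $\delta>0$, and set $c = (p,\beta_u)$. A direct computation guided by the T-junction geometry shows that the three faces of $C$ meeting at $c$ lie respectively in $B_u$ (the face $z=\beta_u$, since $\beta_u$ is the right endpoint of $I_u$), in $B_v$, and in $B_w$ (the two 2D faces of $C$, corresponding to the two segments through $p$ that $R_u$ shares with the capping rectangle and with the other base rectangle). The interior of $C$ lies in the interior of $R_u$ in the first two coordinates, hence is disjoint from every other 2-box in the representation, and its $\cI_3$-interior lies strictly above $\beta_u$, hence is disjoint from $I_u$; it follows that the interior of $C$ is disjoint from the interior of every 3-box, giving the desired empty inner corner. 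The main obstacle will be the 2D geometric case analysis: confirming the T-junction structure and, in the construction, correctly identifying which 2D face of $C$ lies in $B_v$ and which in $B_w$ depending on the orientation of the T-junction.
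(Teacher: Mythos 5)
Your proposal is correct and follows essentially the same route as the paper: identify the T-junction structure of $R_u,R_v,R_w$ at the triple point forced by the strict 2-box representation, use the strictly overlapping condition to find a height at which exactly one of the three $\cI_3$-intervals terminates while that height is interior to the other two, and place the small box in the quadrant of a base rectangle at that height. The only cosmetic difference is the normalization (the paper fixes the capping vertex and then locates an endpoint of $I_v\cap I_w$ interior to $I_u$, whereas you fix the earliest-ending interval and arrange for its vertex to be a base vertex by possibly reversing the axis); your version is somewhat more explicit but mathematically equivalent.
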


\begin{figure}[htbp]
\begin{center}
\includegraphics[scale=1]{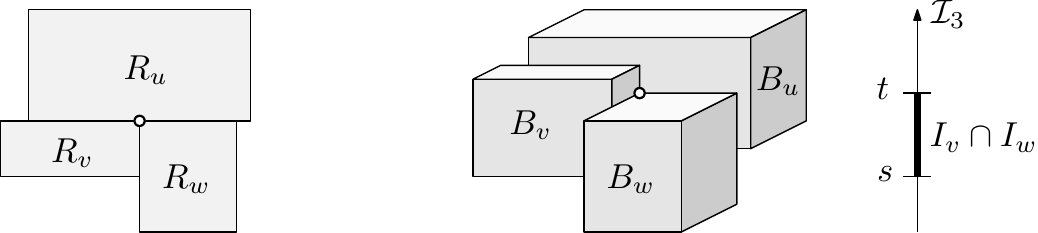}
\caption{The 2-boxes of a triangle
  $uvw$ in
  a strict 2-box representation (left), and an empty inner corner
  (depicted by a white dot) in the corresponding strict 3-box
  representation if the intervals of the third dimension are strictly
  overlapping (right). For the sake of readability, we write $B_x$
  instead of $R_x \times I_x$. \label{fig:over}}
\end{center}
\end{figure}

\begin{proof}
  Let $uvw$ be a triangle of $G$. Without loss of generality, in
  $(\cI_1,\cI_2)$, $u,v,w$ are mapped to rectangles $R_u,R_v,R_w$ such
  that the point $(c_1,c_2)=R_u\cap R_v \cap R_w$ is a corner of $R_v$ and
  $R_w$, but lies on a side of $R_u$  (see
Figure~\ref{fig:over}, left). Moreover, no rectangle of
$(\cI_1,\cI_2)$ other than
$R_u,R_v,R_w$ contains $(c_1,c_2)$. Let $I_u,I_v,I_w$
  be the intervals corresponding to $u,v,w$ in $\cI_3$. Since
  $I_u,I_v,I_w$ are strictly overlapping, $I_u \cap I_v \cap I_w$ is an interval of
  positive length, and none of the three intervals is contained in
  another one. Let $[s,t]=I_v \cap I_w$. Observe that the interior of $I_u$
  intersects at least one of $s,t$ (say $t$ by symmetry), otherwise
  $I_u$ would not intersect $I_v$ or $I_w$ (in an interval of positive
  length), or would be contained in $I_v$ or $I_w$. It is then easy to
  check that the triangle $uvw$ has an empty inner corner in
  $(\cI_1,\cI_2,\cI_3)$, touching the point with coordinate
  $(c_1,c_2,t)$ (see
Figure~\ref{fig:over}, right).
\end{proof}

The following result was proved by Thomassen (see Theorem 4.5 in
\cite{Tho86}), using a strong variant of Theorem~\ref{thm:plasep}.

\begin{thm}{\cite{Tho86}}\label{thm:tho45}
  Every planar triangulation $G$ has a strict 3-box representation
  such that every facial triangle of $G$, except one prescribed
  triangle, has an empty inner corner.
\end{thm}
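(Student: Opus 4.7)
My plan is to prove Theorem~\ref{thm:tho45} by induction on $n = |V(G)|$, writing $t_0 = x_1x_2x_3$ for the prescribed facial triangle. The base case $n = 4$, where $G = K_4$, can be verified by an explicit four-box construction in which the three facial triangles other than $t_0$ each carry an empty inner corner. For the inductive step, I would locate a vertex $v \notin \{x_1, x_2, x_3\}$ of small degree (which exists by an Euler-type degeneracy argument), delete $v$, retriangulate the resulting $d$-face (where $d = \deg(v)$) by a fan from some neighbour $u_1$ of $v$, and apply the induction hypothesis to the smaller triangulation $G'$ thus obtained, still with prescribed facial triangle $t_0$. This produces a strict $3$-box representation $\cR'$ of $G'$ in which every facial triangle other than $t_0$ has an empty inner corner, including each of the $d-2$ fan triangles of $G'$ created by the retriangulation.

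The geometric core of the argument is to insert a new $3$-box $B_v$ into $\cR'$ that (i) meets exactly $B_{u_1}, \ldots, B_{u_d}$, (ii) produces an empty inner corner at each of the $d$ newly created facial triangles $vu_iu_{i+1}$ of $G$, and (iii) preserves the empty inner corners of all facial triangles of $G$ disjoint from $v$. The key observation is that the $d-2$ empty inner corners of the fan triangles in $\cR'$ all lie in the small ``gap'' left by the deletion of $v$ from $G$. To make the insertion routine, I would strengthen the inductive hypothesis to guarantee that these empty corners can be arranged to jointly form a single $3$-box-shaped empty region $E \subseteq \mathbb{R}^3$ adjacent to precisely $B_{u_1}, \ldots, B_{u_d}$ among the boxes of $\cR'$. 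Taking $B_v$ to be (the closure of) $E$, possibly after a small perturbation to maintain strictness, then yields the desired representation of $G$: the $d-2$ fan corners are consumed by $B_v$ (consistent with the fact that those triangles are no longer facial in $G$), and the $d$ new triple intersections $B_v \cap B_{u_i} \cap B_{u_{i+1}}$ acquire their own empty inner corners on appropriate faces of $B_v$. This strengthened invariant on the geometric shape at each face is, I believe, the ``strong variant'' of Theorem~\ref{thm:plasep} alluded to in the text.

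The two principal obstacles are the following. First, formulating and propagating the strengthened inductive invariant on the shape and positioning of the empty corners along every face is the main technical burden and the step where Thomassen's refined version of Theorem~\ref{thm:plasep} would be essential: one must maintain enough geometric control at every facial triangle to guarantee that the $d-2$ fan corners of any future reduction will combine into a $3$-box-shaped region. Second, separating triangles can prevent the fan retriangulation from producing a simple graph; I would handle these by the standard decomposition, splitting $G$ along a separating triangle $T$ into two smaller triangulations $G_1 \supset t_0$ and $G_2$, applying the induction to $G_1$ with prescribed triangle $t_0$ and to $G_2$ with prescribed triangle $T$, and gluing the two representations by placing a scaled copy of $\cR_2$ inside the empty inner corner that $T$ enjoys in $\cR_1$ (which exists precisely because $T \neq t_0$). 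This gluing destroys only the empty corner of $T$, which is fine because $T$ is not facial in $G$.
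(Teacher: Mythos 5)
First, note that the paper does not prove this statement at all: it is quoted verbatim from Thomassen (Theorem~4.5 of~\cite{Tho86}), whose argument runs through the strict rectangle representation of $4$-connected planar triangulations (Theorem~\ref{thm:plasep} and a strengthening of it), a third coordinate chosen so that the intervals of every facial triangle are strictly overlapping (cf.\ Lemma~\ref{lem:corner}), and the separating-triangle decomposition. Your split-and-glue treatment of separating triangles is indeed the standard second half of that argument, but your handling of the $4$-connected case by vertex deletion and reinsertion is different from Thomassen's, and it has a genuine gap.

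The fatal problem is the fan retriangulation. The graph $G'$ you induct on is not $G-v$ but $G-v$ plus the chords $u_1u_3,\dots,u_1u_{d-1}$, and (when $G$ is $4$-connected and $d\ge 4$) these chords are \emph{not} edges of $G$. The inductive representation $\cR'$ therefore realizes each chord $u_1u_i$ as a genuine intersection $B_{u_1}\cap B_{u_i}$, which is moreover a non-degenerate $2$-box by strictness. Inserting a new box $B_v$ cannot undo these pairwise intersections: the representation $\cR'\cup\{B_v\}$ induces $G$ \emph{plus} the fan chords, not $G$. To repair this you would have to shrink or reshape $B_{u_1}$ and the $B_{u_i}$ so as to separate them while preserving all their other contacts and all other empty corners --- a surgery your proposal never describes. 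Separately, even setting this aside, the claim that the $d-2$ empty inner corners of the fan triangles can be arranged to coalesce into a single box-shaped empty region $E$ touching exactly $B_{u_1},\dots,B_{u_d}$, each in a non-degenerate $2$-box and in the cyclic pattern needed to give all $d$ new triangles $vu_iu_{i+1}$ their own empty corners, is precisely the hard geometric content of the theorem; you defer it to an unstated ``strengthened inductive invariant'' that you neither formulate nor show can be propagated. As it stands the proposal assumes the crux rather than proving it. The route that actually works is the one the paper points to: obtain a strict $2$-box representation from Theorem~\ref{thm:plasep} (in a suitably strong form) and manufacture all the empty corners at once in the third dimension via Lemma~\ref{lem:corner}, reserving induction only for the separating-triangle decomposition.
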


The proof of Thomassen~\cite{Tho86} indeed shows a slightly stronger
result, which will be extremely useful in this paper.

\begin{thm}{\cite{Tho86}}\label{thm:tho45v2}
  Let $G$ be a planar triangulation $G$, with outerface $uvw$, and let
  $B_u,B_v,B_w$ be any strict 3-box representation of $u,v,w$ such
  that $uvw$ has an empty inner corner $C$. Then $B_u,B_v,B_w$ extends
  to a strict 3-box representation $\cR$ of $G$
  such that every facial triangle of $G$, except possibly $uvw$, has an empty
  inner corner in $\cR$. Moreover, all 3-boxes except that of $u,v,w$ are
  included in $C$.
\end{thm}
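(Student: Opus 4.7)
My plan is to revisit Thomassen's proof of Theorem~\ref{thm:tho45} and observe that it in fact constructs the representation $\cR$ by induction starting from an arbitrary prescribed configuration of the outer triangle; the inductive argument can be run with any strict $3$-box configuration of $B_u,B_v,B_w$ having an empty inner corner $C$, and every subsequent box is placed inside $C$, giving both the basic conclusion and the ``moreover'' clause in one go. The argument is by induction on $|V(G)|$: the base case $|V(G)|=3$ is immediate, since then $G$ is just the triangle $uvw$, the prescribed boxes $B_u,B_v,B_w$ already form a strict $3$-box representation of $G$, the only facial triangle is the excepted $uvw$, and the ``moreover'' clause is vacuous.

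For the inductive step, I would identify a vertex $x\in V(G)\setminus\{u,v,w\}$ of degree at most $5$. Such a vertex exists since $G$ has $3|V(G)|-6$ edges and each of $u,v,w$ has degree at least $3$, so a double counting argument shows that some vertex outside $\{u,v,w\}$ must have degree at most $5$. I would then form a smaller triangulation $G'$ by deleting $x$ (when $\deg(x)=3$) or by contracting a suitably chosen edge incident to $x$ (when $\deg(x)\in\{4,5\}$, where suitability ensures $G'$ is again a simple triangulation with outerface $uvw$), apply the inductive hypothesis to $G'$ with the same prescribed data $(B_u,B_v,B_w,C)$, and re-insert $x$ into the resulting representation $\cR'$ inside the empty inner corner of the facial triangle of $G'$ that contained $x$. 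In the degree-$3$ case with neighbours $a,b,c$, this empty inner corner $C_{abc}$ is provided by the inductive hypothesis applied to $G-x$ and satisfies $C_{abc}\subseteq C$; I would take $B_x$ to be a small box having one of its corners at the meeting point of the three faces of $C_{abc}$ belonging to $B_a,B_b,B_c$, which makes $B_x$ meet each of $B_a,B_b,B_c$ in a non-degenerate $2$-face, and I would reserve small sub-boxes adjacent to the three faces of $B_x$ opposite to those touching $B_a,B_b,B_c$ as the empty inner corners of the three new facial triangles $xab,xbc,xca$. All of these objects sit inside $C_{abc}\subseteq C$, so the ``moreover'' clause is preserved.

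The main obstacle lies in the degree-$4$ and degree-$5$ cases, which require a careful choice of an edge $xy$ to contract (to avoid parallel edges and loops in $G'$), followed by a local geometric ``un-contraction'' that splits the box of the merged vertex into two boxes sharing a $2$-face and re-establishes the empty inner corners of the new facial triangles incident to $xy$. This is precisely the operation present in Thomassen's original proof of Theorem~\ref{thm:tho45} in~\cite{Tho86}, and, crucially for the present strengthening, it takes place entirely inside a single empty inner corner of $\cR'$; by the inductive ``moreover'' clause that corner is contained in $C$, so the containment of all new boxes in $C$ is automatic, yielding exactly the strengthened statement.
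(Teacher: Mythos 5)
The paper does not prove this statement at all: it is extracted by inspection from Thomassen's proof of Theorem 4.5 in \cite{Tho86}, and the paper explicitly records that that proof runs ``using a strong variant of Theorem~\ref{thm:plasep}'' --- that is, via strict \emph{rectangle} representations of the $4$-connected pieces of $G$, a third coordinate of strictly overlapping intervals (the mechanism isolated in Lemma~\ref{lem:corner}), and recursion into separating triangles (the mechanism isolated in Corollary~\ref{cor:plaframe}). Your reconstruction by deleting a degree-$3$ vertex or contracting an edge at a vertex of degree $4$ or $5$ is therefore not Thomassen's argument, and your assertion that the un-contraction is ``precisely the operation present in Thomassen's original proof'' is unfounded; you cannot lean on \cite{Tho86} for that step.

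That step is also where the genuine gap lies, and it is exactly where the difficulty of the theorem is concentrated (the degree-$3$ insertion you describe is correct, but it cannot carry the induction alone, since all interior vertices may have degree at least $4$, as in the icosahedron). Two problems. First, the existence of a contractible edge at the chosen vertex $x$ (one whose contraction creates no parallel edges, i.e.\ an edge $xn_i$ such that $x$ and $n_i$ have exactly two common neighbours) is asserted, not proved; this is fixable by a short planarity argument about chords of the link cycle of $x$, but it is not free. Second, and fatally, the geometric un-contraction is not justified. Splitting the box $B$ of the merged vertex into two boxes $B_x$ and $B_{n_1}$ sharing a $2$-face must simultaneously (i) leave a non-degenerate $2$-dimensional contact of \emph{each} piece with the box of every common neighbour of $x$ and $n_1$, (ii) remove \emph{entirely} the contact of $B_x$ with every box of $N(n_1)\setminus N(x)$ and of $B_{n_1}$ with every box of $N(x)\setminus N(n_1)$ --- a degenerate leftover intersection would violate both strictness and the required adjacencies --- and (iii) create empty inner corners for the four or five new facial triangles around $x$ while preserving the empty inner corners of all faces of $G'$ whose corner boxes rest on a face of $B$. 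Nothing in your inductive hypothesis controls where on $\partial B$ the contact faces sit, so there is no reason a single cut can achieve (i) and (ii) at once; making this work would require a substantially stronger induction hypothesis that you have not formulated. The intended proof avoids all of this by never contracting: it handles the $4$-connected pieces in one shot via the prescribed-corner rectangle representation and reduces everything else to the (easy) separating-triangle recursion you correctly carried out in the degree-$3$ case.
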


Let $G$ be a graph embedded in the plane, or in $\OS_g$ with $g>0$,
and such that all the triangles are contractible. Then each triangle
bounds a region homeomorphic to an open disk. This region is called
the \emph{interior} of the triangle. Let $H$ be the graph obtained
from $G$ by removing (the vertices in) the interior of each
triangle (distinct from the outerface of $G$, in case $G$
is embedded in the plane). The graph $H$ is called the \emph{frame} of
$G$. Note that $H$ is embedded in the plane or $\OS_g$ with $g>0$,
with the property that all its triangles are facial. Moreover, if $G$
is embedded in the plane, the outerfaces of $G$ and $H$ coincide.

A triangle in a graph $G$ embedded in the plane is said to be
\emph{internal} if it is distinct from the outerface of $G$. The following direct consequence of Theorem~\ref{thm:tho45v2} will be
repeatedly used in the next section (in conjunction with Corollary~\ref{cor:plasep} and
Lemma~\ref{lem:corner}). 

\begin{cor}\label{cor:plaframe}
  Let $G$ be a planar graph and let $H$ be its frame. Then any
  strict 3-box representation of $H$ in which each internal triangle has
  a fixed empty inner corner can be extended to a strict 3-box
  representation of $G$. Moreover, if $uvw$ is an internal triangle of $H$
  and $x$ is a vertex of $G$ inside the region bounded by $uvw$ in
  $G$, then $x$ is mapped to a 3-box lying in the empty inner corner
  of $uvw$.
\end{cor}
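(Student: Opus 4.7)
The plan is to proceed by induction on $|V(G)|$, the base case $V(G)=V(H)$ (equivalently $G=H$) being trivial. In the inductive step I first observe that any two distinct internal triangles of $H$ must bound disjoint disks in $G$: if the disk of one such triangle $T_1$ sat inside the disk of another such triangle $T_2$, then, since $T_1\neq T_2$ share at most two vertices, the third vertex of $T_1$ would lie in the interior of $T_2$, contradicting the defining property of the frame $H$. Because of this disjointness, each internal triangle of $H$ whose disk contains a vertex of $V(G)\setminus V(H)$ can be handled independently.

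Fix one such triangle $T=uvw$, and let $G_T$ be the plane subgraph of $G$ on the closed disk bounded by $T$; since the outerface of $G$ lies outside $T$, we have $|V(G_T)|<|V(G)|$. The aim is to fill the empty inner corner $C$ of $uvw$ with a strict 3-box representation of $G_T$ extending the given $B_u,B_v,B_w$. Because $G_T$ itself need not be a triangulation, I first pass to its frame $F$: $F$ has outerface $uvw$ and only facial triangles, but may have large faces. I would then triangulate $F$ using the helper-vertex construction from the proof of Corollary~\ref{cor:plasep}, obtaining a planar triangulation $F'$ with outerface $uvw$ that satisfies $F'[V(F)]=F$; in particular, the facial triangles of $F'$ supported on $V(F)$ are precisely the internal triangles of $F$ together with $uvw$ itself.

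Theorem~\ref{thm:tho45v2} now applies to $F'$: starting from $B_u,B_v,B_w$ and the empty corner $C$, it yields a strict 3-box representation of $F'$ in which every facial triangle other than $uvw$ carries an empty inner corner and every box other than $B_u,B_v,B_w$ is contained in $C$. Discarding the helper boxes gives a strict 3-box representation of $F$ whose internal triangles all have empty inner corners---precisely the hypothesis of the corollary for the pair $(G_T,F)$. Since $|V(G_T)|<|V(G)|$, the inductive hypothesis extends it to a strict 3-box representation of $G_T$; by construction every new box stays inside $C$, and any vertex of $G_T$ lying inside a further internal triangle of $F$ is placed inside that triangle's own empty corner. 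Performing this construction at every internal triangle of $H$ assembles the desired representation of $G$.

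The main obstacle is that Theorem~\ref{thm:tho45v2} is stated only for triangulations, whereas $G_T$ need not be one; the detour through the frame $F$ and the auxiliary triangulation $F'$ is the technical heart of the argument and is precisely why the corollary is phrased in terms of frames. The only subtle point in that detour is that, after deleting the helper boxes, each internal triangle of $F$ must retain an empty inner corner---this follows immediately because each such triangle is a facial triangle of $F'$ distinct from the outerface, and the helper-vertex construction of Corollary~\ref{cor:plasep} is designed so that no newly added triangle of $F'$ is entirely supported on $V(F)$.
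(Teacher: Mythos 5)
Your argument is correct, and its core is exactly what the paper has in mind when it calls this a ``direct consequence'' of Theorem~\ref{thm:tho45v2}: the internal triangles of $H$ are facial, so they bound pairwise disjoint disks, and the piece of $G$ inside each such triangle can be realized inside that triangle's (arbitrarily small) empty inner corner, whose interior meets no other box of the representation of $H$. The one place where you diverge is the handling of the fact that $G_T$ need not be a triangulation: you pass to the frame $F$ of $G_T$, triangulate $F$, and then recurse on the pair $(G_T,F)$, which forces the whole proof into an induction on $|V(G)|$. This detour is sound (your checks that $|V(G_T)|<|V(G)|$ for an \emph{internal} triangle, that $F$ sits as an induced subgraph of $F'$, and that the empty inner corners of the facial triangles of $F'$ survive the deletion of the helper boxes are all the right points to verify), but it is not needed: the helper-vertex construction from Corollary~\ref{cor:plasep} applies to \emph{any} plane graph, not just to frames, so you can embed $G_T$ itself as an induced subgraph of a planar triangulation $G_T'$ with outerface $uvw$, apply Theorem~\ref{thm:tho45v2} once to $G_T'$ starting from $B_u,B_v,B_w$ and the corner $C$, and then simply discard the helper boxes --- strictness and adjacencies are preserved under restriction to an induced subgraph, and all retained boxes other than $B_u,B_v,B_w$ already lie in $C$. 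That one-shot version eliminates the induction entirely; the role of the frame in the statement is only to guarantee that the triangles of $H$ one recurses into are facial (hence have well-defined, disjoint interiors) and that Lemma~\ref{lem:corner} can be invoked to supply the empty inner corners, not to make the triangulation step possible.
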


\section{Locally planar graphs and toroidal graphs}\label{sec:locpla}

We are now ready to prove the main result of this paper.

\begin{thm}\label{thm:locpla}
Let $G$ be a graph embedded in $\OS_g$, $g\ge 1$, with edge-width at  least $40(2^g-1)$. Then $G$ has boxicity at most 5.
\end{thm}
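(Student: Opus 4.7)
The plan is to refine the warmup's 7-box construction into a 5-box one by collapsing its three third-coordinate interval graphs $\cI_3$, $\cI_6$, and $\cI_7$ into a single interval graph $\cI_*$; the needed flexibility in choosing the third coordinate of a 3-box representation of a planar graph comes from Section~\ref{sec:pla}. First I would apply Theorem~\ref{thm:placc} with $d=4$ (so that the hypothesis $8(d+1)(2^g-1) = 40(2^g-1)$ matches the edge-width assumption) to obtain a planarizing collection $C_1,\ldots,C_g$ of induced cycles with pairwise minimum distance at least $4$, and set $C = \bigcup_i V(C_i)$, let $N$ be the set of vertices outside $C$ with a neighbor in $C$, and $R = V \setminus (C \cup N)$. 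As in the warmup, $G[N\cup R]$ is planar and $G[C \cup N]$ is a disjoint union of $g$ planar subgraphs, one in an annular neighborhood of each $C_i$.

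Let $H_1$ and $H_2$ be the frames of $G[N\cup R]$ and $G[C\cup N]$, so each is a planar graph in which every triangle is facial. Corollary~\ref{cor:plasep} then supplies strict 2-box representations $(\cI_1,\cI_2)$ of $H_1$ and $(\cI_4,\cI_5)$ of $H_2$, which I extend to $V$ by mapping every $c \in C$ to a 2-box containing all other 2-boxes in the first pair, and every $r \in R$ similarly in the second pair. The heart of the plan is to construct one interval graph $\cI_*$ on $V$ satisfying: (a) $\cI_* \supseteq G$; (b) the sets $C$ and $R$ are pairwise non-adjacent in $\cI_*$; (c) for every triangle of $H_1$ the three corresponding $\cI_*$-intervals are strictly overlapping; and (d) the same for every triangle of $H_2$. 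A natural attempt is a staircase construction, assigning vertices of $C$ distinct intervals inside a half-line $(-\infty,0)$, vertices of $R$ distinct intervals inside $(1,+\infty)$, and vertices of $N$ long intervals spanning $[0,1]$, all positioned so that no interval is contained in another, across types as well as within. Given such an $\cI_*$, Lemma~\ref{lem:corner} produces an empty inner corner at every internal triangle of both $H_1$ and $H_2$, and Corollary~\ref{cor:plaframe} extends $(\cI_1,\cI_2,\cI_*|_{V(H_1)})$ and $(\cI_*|_{V(H_2)},\cI_4,\cI_5)$ to strict 3-box representations of $G[N\cup R]$ and $G[C\cup N]$, respectively.

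Combining everything, the 5-tuple $(\cI_1,\cI_2,\cI_*,\cI_4,\cI_5)$ induces $G$: non-edges within $N\cup R$ are captured by the triple $(\cI_1,\cI_2,\cI_*)$, non-edges within $C\cup N$ by $(\cI_*,\cI_4,\cI_5)$, and non-edges between $C$ and $R$ by $\cI_*$ alone via (b). The main obstacle is the construction of $\cI_*$: verifying (c)--(d) simultaneously for both frames is delicate because triangles can mix the three vertex types and one must prevent any interval from being contained in another. A more subtle point is that a vertex $v \in N$ may be an inner vertex of both $H_1$ and $H_2$, in which case the two invocations of Corollary~\ref{cor:plaframe} each prescribe an interval for $v$ in $\cI_*$ inside the respective empty inner corner, and these prescriptions must be made compatible; this is where the freedom to shrink and position the empty inner corners, combined with the distance-$4$ condition between the planarizing cycles, needs to be exploited most carefully.
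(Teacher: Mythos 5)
Your overall strategy is the paper's: planarizing cycles at mutual distance $4$ (Theorem~\ref{thm:placc} with $d=4$), the partition into $C$, $N$, $R$, two strict $2$-box representations glued by a single shared interval graph separating $C$ from $R$, and Lemma~\ref{lem:corner} plus Corollary~\ref{cor:plaframe} to reinsert the vertices hidden inside separating triangles. The part you call the main obstacle is in fact the easy part: the paper takes three overlapping bands $[\epsilon_v,2+\epsilon_v]$ for $C$, $[1+\epsilon_v,4+\epsilon_v]$ for $N$, $[3+\epsilon_v,5+\epsilon_v]$ for $R$ with distinct small $\epsilon_v$, which delivers your conditions (a)--(d) simultaneously; your staircase is essentially this.

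The genuine gap is the one you flag in your last sentence, and it is not resolvable by ``exploiting the freedom to shrink and position the corners.'' Because you define $N=N_G(C)$ first and only then take the frames of $G[N\cup R]$ and $G[C\cup N]$, a vertex of $N$ may be a frame vertex on one side and a non-frame vertex on the other (or non-frame on both). Then one invocation of Corollary~\ref{cor:plaframe} forces its $\cI_*$-interval to lie inside an arbitrarily small empty inner corner, while the other side needs that same vertex to carry its full-length $N$-band interval so that the triangles of the other frame through it remain strictly overlapping; no positioning reconciles a tiny interval with a band. The paper dissolves the conflict by reordering the construction: first reduce to the case where $G$ is a triangulation (a step you also need in order to invoke Theorem~\ref{thm:placc} at all), take the frame $H$ of $G$ \emph{once}, find the planarizing cycles in $H$, and define $C$, $N$, $R$ as a partition of $V(H)$ only. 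The leftover vertices are split into $T_1$ (inside a face of $H$ avoiding $C$) and $T_2$ (inside a face of $H$ meeting $C$); each receives its fifth-coordinate interval from exactly one corner-extension ($T_1$ via $(\cI_1,\cI_2,\cI_5)$, $T_2$ via $(\cI_3,\cI_4,\cI_5)$) and a large box in the remaining two coordinates, and the final disjointness of $C\cup T_2$ from $R\cup T_1$ is read off from $\cI_5$ alone. Two smaller omissions: you must verify that the two frames are \emph{not} triangulations before applying Corollary~\ref{cor:plasep} (this is where edge-width at least $4$ is used a second time, since deleting $C_i$ creates two faces of length at least $4$), and you should justify that all triangles of $G$ are contractible so that frames and interiors are well defined.
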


\begin{proof}
  It is well-known that if $G$ has edge-width at least $k$, then $G$
  is an induced subgraph of a triangulation of $\OS_g$ of edge-width
  at least $k$ (see for instance Lemma 3.4 in~\cite{Tho93}). Since
  having boxicity at most $k$ is a property that is closed under
  taking induced subgraphs, in the
  following we can assume that $G$ is a triangulation of $\OS_g$.
  Since $G$ has edge-width at least $40(2^g-1)>3$, all the
  triangles of $G$ are contractible.  Let $H$ be the frame of
  $G$. Observe that $H$ is a triangulation of $\OS_g$ of edge-width at
  least $40(2^g-1)$. By
  Theorem~\ref{thm:placc}, $H$ has a collection of planarizing cycles
  $C_1,\ldots,C_g$ with minimum distance at least 4. 

Consider the following 5 sets of vertices of $G$, forming a partition
of $V(G)$:
\begin{enumerate}
\item[$C$]: the union of all cycles $C_1,\ldots,C_g$,
\item[$N$]: the union of all neighbors in $H$ of a vertex of $C$,
\item[$R$]: the vertices of $H$ not in $C\cup N$,
\item[$T_1$]: the vertices of $G$ lying inside a non-facial triangle
that does not intersect $C$,
\item[$T_2$]: the vertices of $G$ lying inside a non-facial triangle
  intersecting $C$.
\end{enumerate}

Observe that since all the triangles of $G$ are contractible, all the
triangles of $H$ are facial (and moreover, the triangles of $H$ are
precisely the faces of $H$, since $H$ is a triangulation).
  Let $H_1$ be the planar graph obtained from $H$ by deleting the
  vertices of $C$ (i.e. $H_1$ is the subgraph of $G$ induced by $N \cup R$). All the triangles of $H_1$ are facial,
  and we claim that $H_1$ is not a triangulation. To see this, observe
  that the deletion of each $C_i$ produces two faces with vertex-set
  $L(C_i)$ and $R(C_i)$, respectively (see Figure~\ref{fig:lrc}, right). The two cycles bounding these
  faces in $H_1$ correspond to two cycles of $H$ that are (freely)
  homotopic to $C_i$, and therefore non-contractible in $\OS_g$. Since
  $H$ has edge-width at least 4, the boundary walks of the two faces of $H_1$ resulting from
  the deletion of $C_i$ have length at least 4, and $H_1$ is not a
  triangulation. By Corollary~\ref{cor:plasep}, $H_1$ has a strict 2-box
  representation $(\cI_1, \cI_2)$. We extend this 2-dimensional
  representation of $H_1$ to $C \cup T_2$ by mapping all the vertices of $C \cup
  T_2$ to a large 2-box containing all the 2-boxes of $(\cI_1,
  \cI_2)$. The images of the vertices of $T_1$ in $(\cI_1,
  \cI_2)$ will be defined later in the proof.

  Let $H_2$ be the subgraph of $H$ induced by $C\cup N$. Since the
  collection $C_1,\ldots,C_g$ has minimum distance at least 4, $H_2$
  is the disjoint union of $g$ graphs, each being a subgraph of
  $H$ induced by $C_i$ together with its neighborhood $N_H(C_i)$, for
  $1 \le i \le g$. As each of these graphs is embedded in a cylinder,
  they are all planar and so is $H_2$. As before, we can argue that all the triangles of $H_2$ are
  facial, and $H_2$ is not a triangulation. It follows from
  Corollary~\ref{cor:plasep} that $H_2$ has a strict 2-box
  representation $(\cI_3, \cI_4)$. We extend this 2-dimensional
  representation of $H_2$ to $R \cup T_1$ by mapping all the vertices of
  $R \cup T_1$ to a large 2-box containing all the 2-boxes of
  $(\cI_3, \cI_4)$. Again, the images of the vertices of $T_2$ in $(\cI_3,
  \cI_4)$ will be defined later.

  For each vertex $v$ of $H$, we choose a real
  $0<\epsilon_v<\tfrac14$, in such way that all the chosen $\epsilon$
  are distinct. Let $\cI_5$ be the following interval graph: each
  vertex $v$ of $C$ is mapped to $[\epsilon_v,2+\epsilon_v]$, each
  vertex $v$ of $N$ is mapped to $[1+\epsilon_v,4+\epsilon_v]$, and
  each vertex $v$ of $R$ is mapped to
  $[3+\epsilon_v,5+\epsilon_v]$. Observe that in the corresponding
  interval graph, $C$, $N$ and $R$ are cliques, $N$ is complete to $C$
  and $R$, while there are no edges between $C$ and $R$.

  In order to complete the description of our 5-box representation
  $(\cI_1,\ldots,\cI_5)$ of $G$, we only need to prescribe
  the images of the vertices of $T_1$ in $(\cI_1, \cI_2,\cI_5)$ and
  the images of the vertices of $T_2$ in $(\cI_3, \cI_4,\cI_5)$. All
  these images will be defined using Corollary~\ref{cor:plaframe}, as we
  now explain.

  Consider first the 3-box representation $(\cI_1,\cI_2,\cI_5)$,
  restricted to the vertices of $H_1$ (i.e. the vertices of
  $N \cup R$). Since $(\cI_1,\cI_2)$ is a strict 2-box representation
  of $H_1$ and any three intervals of $\cI_5$ corresponding to some vertices
  of $N \cup R$ are strictly overlapping, by Lemma~\ref{lem:corner}
  every (facial) triangle of $H_1$ has an
  empty inner corner in $(\cI_1,\cI_2,\cI_5)$. By
  Corollary~\ref{cor:plaframe}, this representation of $H_1$ can be
  extended to $T_1$, so that each vertex $u$ of $T_1$, lying inside
  some facial triangle $xyz$ of $H$, is mapped to a 3-box $B_u$ such
  that all the points of $B_u$ are at $L_\infty$-distance at most
  $\tfrac14$ from the empty inner
  corner of $xyz$. This shows how to extend
  $(\cI_1,\cI_2,\cI_5)$ to $T_1$ (in the sense that the restriction of
  $(\cI_1,\cI_2,\cI_5)$ to $N \cup R\cup T_1$ is a representation of the subgraph of
  $G$ induced by $N \cup R\cup T_1$).

  Similarly, consider the 3-box representation $(\cI_3,\cI_4,\cI_5)$,
  restricted to $H_2$ (i.e. to the vertices of $C \cup N$). As before,
  we can prove using Lemma~\ref{lem:corner} that all facial triangles of $H_2$ have an empty inner
  corner in this representation, and it follows from
  Corollary~\ref{cor:plaframe} that this representation of $H_2$ can be
  extended to $T_2$, so that each vertex $u$ of $T_2$, lying inside
  some facial triangle $xyz$ of $H$, is mapped to a 3-box $B_u$ such
  that all the points of $B_u$ are at $L_\infty$-distance at most
  $\tfrac14$ from the empty inner corner of $xyz$. Recall that by
  definition of $T_2$, such a triangle $xyz$ intersects $C$, and
  therefore $xyz$ is also a facial triangle of $H_2$. This shows how to
  extend $(\cI_3,\cI_4,\cI_5)$ to $T_2$ (in the sense that the
  restriction of $(\cI_3,\cI_4,\cI_5)$ to $C \cup N\cup T_2$ is a
  representation of the subgraph of $G$ induced by $C \cup N\cup T_2$) and
  completes the description of our 5-box representation
  $\cR=(\cI_1,\cI_2,\cI_3,\cI_4,\cI_5)$ of $G$.

\smallskip

We now prove that $\cR$ is a representation of $G$, i.e. $G$ is precisely the
intersection of the interval graphs $\cI_i$, for $1\le i \le 5$. By
the definition of $\cR$, the restriction of $\cR$ to
$N\cup R \cup T_1$ represents the subgraph of $G$ induced by
$N\cup R \cup T_1$ and the restriction of $\cR$ to $C \cup N \cup T_2$
represents the subgraph of $G$ induced by $C \cup N \cup T_2$. So we
only need to consider pairs of vertices $u\in C\cup T_2$ and
$v \in R\cup T_1$ (by definition, two such vertices $u,v$ are
non-adjacent in $G$), and show that the 5-boxes of $u$ and $v$ in
$\cR$ are disjoint. To see this, consider only $\cI_5$ and observe
that all vertices of $T_2$ are mapped to intervals that are at
distance at most $\tfrac14$ of an interval of $C$, so all the
intervals of $C\cup T_2$ end before
$2+\tfrac14+\tfrac14=\tfrac52$. Similarly, all the intervals of
$R\cup T_1$ start after $3-\tfrac14= \tfrac{11}4$. It follows that the
intervals of $u$ and $v$ are disjoint in $\cI_5$ and consequently, the
boxes of $u$ and $v$ are disjoint in $\cR$, as desired.
\end{proof}

It was proved in~\cite{EJ13} that toroidal graphs have boxicity at
most 7, while there are toroidal graphs of boxicity 4. The proof was
mainly based on the 
the following result of Schrijver~\cite{S93}: 

\begin{thm}{\cite{S93}}\label{thm:sch}
Every graph embedded in
the torus with face-width $k$ contains $\lfloor 3k/4 \rfloor$
vertex-disjoint (and homotopic) non-contractible cycles.
\end{thm}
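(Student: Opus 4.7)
The plan is to derive Schrijver's theorem by combining the universal cover of the torus with a max-flow/min-cut argument on the resulting cylinder graph.

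First, I would work in the universal cover $\RR^2 \to \OS_1$, lifting $G$ to a $\mathbb{Z}^2$-periodic plane graph $\tilde G$. A non-contractible cycle of $G$ lifts to a biinfinite path in $\tilde G$ invariant under a primitive translation vector $v \in \mathbb{Z}^2$, and two non-contractible cycles are freely homotopic if and only if they share the same $v$ (up to sign). The goal therefore becomes: find a primitive $v$ for which $\tilde G$ contains $\lfloor 3k/4 \rfloor$ pairwise vertex-disjoint $v$-invariant biinfinite paths, which will descend to the desired disjoint homotopic cycles on $\OS_1$.

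Second, I would reduce to the case $v=(1,0)$ by applying a toral automorphism of $\OS_1$, and slit $\tilde G$ along a vertical line avoiding its vertices to obtain a graph $G^*$ embedded in an infinite horizontal strip (equivalently, a cylinder once we quotient by horizontal translation). The face-width assumption translates into the statement that every vertical simple non-contractible curve on $\OS_1$ meets at least $k$ vertices of $G$; equivalently, the minimum vertex-cut between the two boundary components of the cylinder is at least $k$. By Menger's theorem applied in this planar cylinder, there are at least $k$ pairwise vertex-disjoint left-to-right crossings of a fundamental domain.

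The main obstacle, and the source of the $3/4$ constant, is that these vertex-disjoint crossings need not close up into vertex-disjoint cycles on the torus: when the cylinder is re-glued into $\OS_1$, the endpoints of the crossings on the two boundaries must be identified in a compatible way, and different crossings may create cycles that collide after identification. I expect the hard step to be a careful analysis of the cyclic permutation that the crossings induce on the identified boundary, showing that a subcollection of size at least $\lfloor 3k/4 \rfloor$ can always be selected whose endpoints match up without conflict. This rotation- and parity-based counting is the technical heart of the argument, and is where the specific fraction $3/4$ is extracted; the remainder of the proof reduces to the standard dictionary between cycles on surfaces, the universal cover, and max-flow/min-cut on planar cylinder graphs.
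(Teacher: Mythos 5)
This theorem is quoted in the paper from Schrijver's article [S93] and is not proved there, so there is no in-paper argument to compare against; judged on its own merits as a proof of Schrijver's theorem, your sketch has two genuine gaps, both located exactly where the real difficulty lies. First, you cannot ``reduce to the case $v=(1,0)$'' before knowing that a good class $v$ exists: for a \emph{fixed} primitive class $z$, the maximum number of pairwise disjoint cycles homotopic to $z$ can be well below $\lfloor 3k/4\rfloor$ (and in the extremal examples it is below $3k/4$ for all but the optimal classes). Schrijver's actual argument introduces the ``crossing norm'' $N(d)=$ minimum number of points of $G$ on a closed curve in homology class $d$, proves the min--max relation that the maximum number of disjoint circuits homotopic to $z$ equals $\min_d N(d)/|\det(z,d)|$, and then uses a genuine geometry-of-numbers lemma (for any norm $N$ on $\RR^2$ with $N\ge k$ on nonzero lattice points, some primitive $z$ satisfies $N(d)\ge \tfrac34 k\,|\det(z,d)|$ for all $d$, with $\tfrac34$ tight for a hexagonal norm) to select $z$. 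The constant $3/4$ comes from this lattice optimization over homotopy classes, not from any boundary-matching count after a single cut.

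Second, the step ``a subcollection of size at least $\lfloor 3k/4\rfloor$ of the $k$ Menger paths can be selected whose endpoints match up'' fails. After regluing the cylinder, the $k$ disjoint crossings concatenate according to a permutation of the boundary, and if that permutation is a single $k$-cycle, the crossings form \emph{one} cycle winding $k$ times around; no subcollection of the given paths then closes up into disjoint cycles at all. One must reroute, not select, and the obstruction to rerouting is measured not only by the cut direction $(0,1)$ (which is all your Menger application sees) but by $N(d)/|\det(z,d)|$ over \emph{all} transversal classes $d$. Establishing that this is the only obstruction is Schrijver's earlier theorem on disjoint circuits of prescribed homotopies, a substantial result in its own right. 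So the framework (universal cover, classes as lattice vectors, cutting to a cylinder) is reasonable, but the two ingredients you defer to ``careful analysis'' are each theorems, and the one you identify as the source of $3/4$ is not where that constant actually arises.
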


Using some ideas of the proof of Theorem~\ref{thm:locpla}, we now
improve the bound on the boxicity of toroidal graphs from 7 to
6. Theorem~\ref{thm:locpla} implies that graphs embedded on the torus
with edge-width at least 40 have boxicity at most 5. We also decrease
this bound on the edge-width from 40 to 4.

\begin{thm}\label{thm:tor}
If $G$ is a graph embedded on the torus, then $G$ has boxicity at
most 6. If, moreover, $G$ has edge-width at
least 4, then $G$ has boxicity at most 5.
\end{thm}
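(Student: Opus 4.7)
The plan is to follow the general outline of the proof of Theorem~\ref{thm:locpla}, specialized to $g=1$, while handling low face-width regimes by brute removal of a small vertex set. As in that proof, we may assume $G$ is a triangulation of the torus; then $G$ has edge-width either exactly $3$ (that is, $G$ contains a non-contractible triangle) or at least $4$ (in which case all triangles are contractible).

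For the first statement, the exceptional case of edge-width $3$ is easy: a non-contractible triangle $T=xyz$ planarizes $G$ upon removal of its three vertices, since cutting the torus along $T$ yields a disk, so $G-V(T)$ is planar and has boxicity at most $3$ by Thomassen's theorem~\cite{Tho86}. Each of $x,y,z$ is then added back using one extra coordinate (represented as a single point, with its neighbors mapped to intervals containing that point and its non-neighbors to intervals avoiding it), yielding boxicity at most $3+3=6$.

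For the second statement, the frame $H$ of $G$ is a well-defined torus triangulation with all triangles facial, and the plan is to execute the argument of Theorem~\ref{thm:locpla} with a single planarizing cycle $C$; one needs $C$ to be non-contractible, induced, $2$-sided, and to satisfy a separation condition ($\dist(C,C)\geq 4$, or at least the absence of edges between $L(C)$ and $R(C)$) which guarantees that $H_2=H[C\cup N_H(C)]$ embeds in a cylinder, is planar, and has its two outer faces of length at least $4$. I case-split on the face-width. If the face-width is at most $2$, I remove the at most $2$ vertices of a shortest non-contractible curve to planarize $G$ and rebuild using at most $3+2=5$ coordinates. If the face-width is at least $4$, Schrijver's Theorem~\ref{thm:sch} produces three pairwise disjoint, pairwise homotopic non-contractible cycles; I take $C$ to be the middle one, and the two flanking cycles act as buffers ensuring that $L(C)$ and $R(C)$ are non-adjacent, so $H_2$ is planar as required. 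The construction then proceeds exactly as in Theorem~\ref{thm:locpla}: partition $V(G)$ into $C$, $N=N_H(C)$, $R=V\setminus(C\cup N)$, and the two interior-triangle classes $T_1,T_2$; build strict $2$-box representations of the planar graphs $H_1=H[N\cup R]$ and $H_2$ via Corollary~\ref{cor:plasep}; extend them to $T_1$ and $T_2$ via Corollary~\ref{cor:plaframe}; and glue the two $3$-box representations with a fifth coordinate separating $C$ from $R$ (with $N$ in the middle).

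The main obstacle is the remaining subcase, face-width exactly $3$ with edge-width at least $4$, where Schrijver's theorem does not yet yield three cycles. Here a minimum non-contractible curve meets $G$ in three vertices $v_1,v_2,v_3$ whose removal planarizes $G$, but a naive add-back of these three vertices only yields boxicity $6$. The plan in this subcase is to exploit the fact that edge-width at least $4$ prevents $v_1,v_2,v_3$ from spanning a short non-contractible cycle, together with the constraint that they lie in a fixed cyclic order on a common non-contractible curve, in order to add them jointly using only two additional coordinates rather than three. Closing this gap is the delicate heart of the argument.
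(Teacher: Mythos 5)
Your reduction to a triangulation and your treatment of the edge-width-$3$ case (remove the three vertices of a non-contractible triangle, represent the planar remainder in three dimensions, and re-insert each removed vertex with one extra coordinate apiece) match the paper and do give boxicity at most~$6$. The problems are in the second statement. First, for a triangulation the face-width equals the edge-width (the paper uses this explicitly), so once you assume edge-width at least~$4$ your subcases ``face-width at most~$2$'' and ``face-width exactly~$3$'' are vacuous; the case you flag as the ``delicate heart'' does not exist. Second, and more seriously, the case that \emph{is} delicate --- edge-width equal to $4$ or $5$ --- is exactly the one your ``face-width at least~$4$'' argument fails to cover. Schrijver's theorem at face-width $k\in\{4,5\}$ yields only $\lfloor 3k/4\rfloor=3$ disjoint homotopic cycles $C_1,C_2,C_3$, and taking $C=C_2$ with $C_1,C_3$ as buffers does \emph{not} ensure that $H_2=H[C\cup N_H(C)]$ is planar: a vertex of $C_1$ lying in $L(C_2)$ and a vertex of $C_3$ lying in $R(C_2)$ can be joined by an edge drawn in the third cylinder, the one bounded by $C_3$ and $C_1$ ``around the back'' of the torus, and in a triangulation such edges are essentially forced when that cylinder contains few interior vertices. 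This is precisely why the paper requires face-width at least~$6$, hence \emph{four} disjoint homotopic cycles: the fourth cycle separates $C_1$ from $C_3$ on the far side and confines $G[C\cup N(C)]$ to a cylinder.

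For edge-width $4$ or $5$ the paper does something genuinely different, and nothing in your proposal substitutes for it. It removes a shortest (hence chordless) non-contractible cycle $C$ on $4$ or $5$ vertices, observes that $G-C$ is planar and that no vertex inside a non-facial triangle of $G-C$ is adjacent to $C$, partitions $V(C)$ into three independent sets $S_3,S_4,S_5$ of size $1$ or $2$, and builds three interval graphs $\cI_3,\cI_4,\cI_5$ (Figure~\ref{fig:interval}) that simultaneously realize the cycle $C$ and its adjacencies to $G-C$, while the one whose $S_i$ is a singleton, after an $\epsilon$-perturbation making the relevant triples of intervals strictly overlapping, also serves as the third coordinate of a strict representation of the frame of $G-C$, extended to the interior-triangle vertices via Lemma~\ref{lem:corner} and Corollary~\ref{cor:plaframe}. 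Re-adding the $4$ or $5$ vertices of $C$ one coordinate apiece, as in your edge-width-$3$ argument, would only give boxicity $7$ or $8$; so this construction (or an equivalent of it) is the missing idea, not the vacuous face-width-$3$ subcase you were trying to close.
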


\begin{proof}
  Let $G$ be a graph embedded on the torus. As before, we can assume
  without loss of generality that $G$ is a triangulation, and thus the
  face-width and the edge-width of $G$ are equal. If $G$ has edge-width at least 6, then by
  Theorem~\ref{thm:sch}, $G$ has 4 vertex-disjoint (and homotopic) non-contractible
  cycles. Let $C$ be one of them. We can assume that $C$ is chordless
  (see~\cite{EJ13}). Note that because of the 4 vertex-disjoint
  cycles, the subgraph of $G$ induced by $C$ and its neighborhood can
  be embedded in a cylinder, and is therefore planar. The exact same proof as that of
  Theorem~\ref{thm:locpla} (with $g=1$) then shows that $G$ has
  boxicity at most 5. 

\smallskip

Assume now that $G$ has edge-width at most 5 and at least 4, and let $C$ be a
shortest non-contractible cycle (in particular, $C$ is chordless).
Observe that $G-C$ is planar; in the remainder of the proof, we fix a
planar embedding of $G-C$. Since $G$ has no non-contractible triangles, all
the triangles of $G$ (and $G-C$) are contractible. Let $H$ be the
frame of (the fixed embedding of)
$G-C$. Let $R$ be the set of vertices of $H$, and let $T$ be the set of
vertices of $G$ lying in a non-facial triangle of (the fixed planar
embedding of) $G-C$. If one of the two faces of $G-C$ with vertex-set
$L(C)$ or $R(C)$ is contained in the interior of some triangle $uvw$
of $G-C$, then $uvw$ is homotopic to $C$ in $G$, and thus
non-contractible (which contradicts the fact that $G$ has edge-width
at least 4). It follows that $L(C)$ and $R(C)$ are included in $R$ and
consequently, no
vertex of $T$ is adjacent to a vertex of $C$. Let $(\cI_1, \cI_2)$ be a strict 2-box
  representation of $H=G[R]$ as in the proof of
  Theorem~\ref{thm:locpla} (since $G$ has edge-width at least 4, a
  proof similar to that of Theorem~\ref{thm:locpla} shows that such a
  representation exists).

We first extend this 2-dimensional
  representation to $C$ by mapping all the vertices of
  $C$ to a large 2-box containing all the 2-boxes of
  $(\cI_1, \cI_2)$. Recall that $C$ induces a cycle of length 4 or 5. We now partition
  the vertices of $C$ into three independent sets $S_3,S_4,S_5$, each containing
  one or two vertices. For any vertex $v$ of $G$, let $N_v$ denote the
  neighborhood of $v$ in $C \cup R$. For each $3\le i \le 5$, we denote by $\cI_i$ the
  interval graph with vertex-set $C \cup R$ depicted in
  Figure~\ref{fig:interval} (left) if $S_i=\{x\}$ or
  Figure~\ref{fig:interval} (right) if $S_i=\{x,y\}$. Observe that in
  $(\cI_3,\cI_4,\cI_5)$, $C$ induces a cycle, $R$ induces a clique,
  and the adjacency between $C$ and $R$ is the same as in
  $G$. 

\begin{figure}[htbp]
\begin{center}
\includegraphics[scale=0.9]{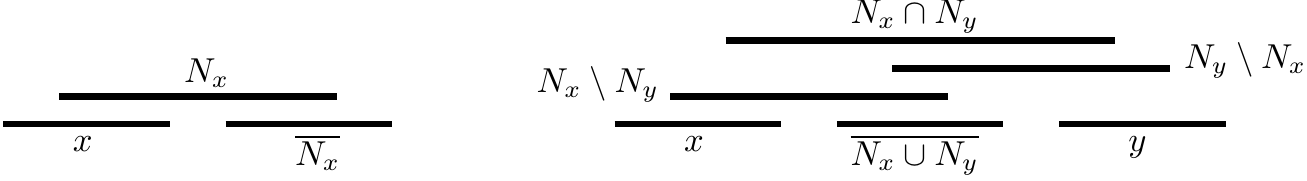}
\caption{The description of $\cI_i$ with $S_i=\{x\}$ (left), and the
  description of $\cI_i$ with $S_i=\{x,y\}$ (right). We use the
  following notation to avoid overloading the figure: $\overline{N_x}=C \cup
  R\setminus (N_x \cup \{x\})$ (left) and $\overline{N_x\cup N_y}=C \cup
  R\setminus (N_x \cup N_y \cup \{x,y\})$ (right).\label{fig:interval}}
\end{center}
\end{figure}

Since $C$ contains at most 5 vertices, one of the sets $S_i$ (say
  $S_3$) contains only one vertex, call it $x$. As before, we
  choose a small real $\epsilon_v>0$ for each vertex $v$ of $H$, so
  that all the chosen $\epsilon$ are distinct, and we change each
  interval $[s_v,t_v]$ of $v$ in $\cI_3$ to
  $[s_v+\epsilon_v,t_v+\epsilon_v]$. If each $\epsilon_v$ is small
  enough, this does not change the graph
  induced by $\cI_3$ (and thus the graph induced by $(\cI_3,\cI_4,\cI_5)$). Let $uvw$ be a triangle of
  $H$. Since $uvw$ is disjoint from $x$ (the unique vertex of
  $S_3$), it follows from the definition of $\cI_3$ (modified with the
  $\epsilon_v$) that the intervals corresponding to $u,v,w$ in $\cI_3$
  are strictly overlapping. By Lemma~\ref{lem:corner}, every
 triangle of $H$ has an empty inner corner in the strict 3-box representation
 $(\cI_1,\cI_2,\cI_3)$. By Corollary~\ref{cor:plaframe}, we can extend
  the strict 3-box representation $(\cI_1,\cI_2,\cI_3)$ of $H$ to $T$
  (so that in $\cI_3$, all the newly added intervals are disjoint from
  the interval of $x$). In $(\cI_4,\cI_5)$, it remains to map all the vertices of $T$ to a
  2-box of $(\cI_4,\cI_5)$ intersecting all the 2-boxes except that of
  the vertices of $C$. This can be done for instance by mapping in
  $\cI_i$ each vertex of $T$
  to the interval labelled $\overline{N_x}$ in
  Figure~\ref{fig:interval} (left), or the interval labelled
  $\overline{N_x\cup N_y}$ in
  Figure~\ref{fig:interval} (right). This ensures that in
  $(\cI_3,\cI_4,\cI_5)$, the vertices of $T$ are adjacent to all the other
  vertices of $G-C$, and non-adjacent to all the vertices of $C$, as
  desired. The representation $(\cI_1,\cI_2,\cI_3,\cI_4,\cI_5)$ of $G$
  then shows that $G$ has boxicity at most 5.

\medskip

Assume now that $G$ has edge-width at most 3. Then a set $S$ of at
most 3 vertices can be removed from $G$ so that $G-S$ is planar, and
thus has boxicity at most 3. Take a 3-box representation of $G-C$, and
extend it to $C$ by mapping all the vertices of $C$ to a large 3-box
containing all the 3-boxes of the representation. Now add 3 intervals
graphs, one for each element of $S$, defined as in
Figure~\ref{fig:interval} (left), where we now define $N_x$, with
$x\in S$, as the neighborhood of $x$ in $G$, and  $\overline{N_x}$ as
$V(G)\setminus (N_x \cup \{x\})$. The obtained 6-box representation
induces $G$, so $G$ has boxicity at most 6.
\end{proof}

\section{Linear extendability}\label{sec:linext}

Theorem~\ref{thm:locpla} easily implies that there exists a function
$f$, such that for any $g\ge 0$ and any graph $G$ embedded on $\OS_g$,
a set of at most $f(g)$ vertices can be removed from $G$ so that the
resulting graph has boxicity at most 5. However, the function $f$
derived from Theorem~\ref{thm:locpla} is exponential in $g$. In this
section, we show how to make $f$ linear in $g$. Note that the proof
works for graphs of Euler genus $g$ (while Theorem~\ref{thm:locpla}
is only concerned with graphs embeddable on $\OS_g$). The previously best known result of this type was that
$O(g)$ vertices can be removed in any graph of Euler genus $g$, so that the
resulting graph has boxicity at most 42~\cite{EJ13}.

\medskip

We will use a technique of Kawarabayashi and Thomassen~\cite{KT12},
who used it to prove several results of this type. Kawarabayashi and
Thomassen~\cite[Theorem~1]{KT12} proved that any graph $G$ embedded on
some surface of Euler genus $g$, with face-width more than $10t$ (for
some constant $t$) has a partition of its vertex-set into three parts
$A,P,X$, such that $X$ has size at most $10tg$, $P$ consists of the
disjoint union of paths that are local geodesics (in the sense that
each subpath with at most $t$ vertices of a path of $P$ is a shortest
path in $G$ and any two vertices at distance at least $t$ in some path
of $P$ are at distance at least $t$ in $G$) and are pairwise at
distance at least $t$ in $G$, and $A$ induces a planar graph having a
plane embedding $H$ such that the only vertices of $A$ having a
neighbor in $P$ lie on the outerface of $H$. 

\medskip

We will also use the
following technical lemma.

\begin{lem}\label{lem:tech}
Let $G$ be a graph whose vertex-set is partitioned into two sets $K$
and $P$, such that $K$ induces a complete graph, $P$ induces a path,
and for every vertex $u$ of $K$, the neighbors of $u$ in $P$ lie in a
subpath of $P$ of at most 3 vertices (equivalently, any two neighbors
of $u$ in $P$ are at distance at most two in $P$). Then for any real
number $t$, $G$ has a
3-box representation $(\cI_1,\cI_2,\cI_3)$ such that all the intervals
of $\cI_3$ corresponding to some vertex of $K$ end at $t$, while all the intervals
of $\cI_3$ corresponding to some vertex of $P$ end strictly before $t$.
\end{lem}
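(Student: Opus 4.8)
The plan is to build the three interval graphs $(\cI_1,\cI_2,\cI_3)$ explicitly, letting $\cI_3$ carry the prescribed endpoint condition and splitting the remaining work between $\cI_1$ and $\cI_2$. Write $P=p_1\cdots p_m$, and for each $u\in K$ having a neighbor on $P$ let $a_u\le b_u$ be the smallest and largest indices of its neighbors on $P$; the hypothesis gives $b_u\le a_u+2$. For $\cI_3$ I would represent $P$ as an \emph{induced} path whose intervals all lie in $(-\infty,t)$ with right endpoints increasing along the path, and represent each $u\in K$ by an interval $[\ell_u,t]$, choosing $\ell_u$ just to the left of the right endpoint of $p_{a_u}$ (and, for a vertex $u$ with no neighbor on $P$, far to the right so that its interval meets no $p_j$). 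Then every interval of $K$ contains the point $t$ and ends exactly at $t$, every interval of $P$ ends strictly before $t$, and $u$ and $p_j$ are adjacent in $\cI_3$ if and only if $j\ge a_u$. Thus $\cI_3$ simultaneously realises the clique on $K$, realises the path on $P$ (so that all non-edges $p_ip_j$ with $|i-j|\ge 2$ are already destroyed), destroys every non-edge $up_j$ with $j<a_u$, and meets the required condition on the third intervals.

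For $\cI_2$ I would use the mirror construction: make $P$ a clique through a common point $0$, and represent each $u\in K$ by $[0,r_u]$ with $r_u$ chosen so that $u$ meets exactly the path intervals $p_j$ with $j\le b_u$. This keeps $K$ a clique and destroys every remaining non-edge $up_j$ with $j>b_u$. After intersecting $\cI_2$ and $\cI_3$, the only non-edges of $G$ that still survive are the \emph{holes}: pairs $up_{a_u+1}$ for which $u$ is adjacent to $p_{a_u}$ and $p_{a_u+2}$ but not to $p_{a_u+1}$. The entire task for $\cI_1$ is therefore to be a supergraph of $G$ — containing the clique $K$, all path edges, and all window edges — while separating each such $u$ from its hole vertex $p_{a_u+1}$.

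The hard part is exactly this last coordinate $\cI_1$. The difficulty is that $\cI_1$ must keep $K$ a clique, hence (by the Helly property of intervals) all its $K$-intervals must pass through one common point; it must keep every path edge, so $p_{a_u+1}$ still meets both $p_{a_u}$ and $p_{a_u+2}$; and yet $u$ must meet $p_{a_u}$ and $p_{a_u+2}$ without meeting $p_{a_u+1}$. An interval that reaches $p_{a_u}$ and $p_{a_u+2}$ \emph{through} $p_{a_u+1}$ is forced to create the hole edge, and the constraints interact because distinct holes share path and clique vertices. The device I would use to break this is that the three vertices of any window are \emph{consecutive} on $P$, so their indices have pairwise distinct residues modulo $3$ (in particular the middle $p_{a_u+1}$ has parity opposite to the two ends): I would place every path vertex that is never a hole-middle as a large interval through the common clique point, so that it imposes no constraint, and arrange the at most three relevant vertices of each window so that $u$ reaches its two window-ends from one side of the common point while $p_{a_u+1}$ lies strictly on the other side. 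Checking that this placement can be made \emph{consistently} for all windows at once — that the residue/parity classes can be laid out so that every $u$ simultaneously catches its two ends and misses its middle — is the crux of the argument and the step I expect to require the most care.
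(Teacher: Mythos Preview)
Your reduction via $\cI_2$ and $\cI_3$ is fine (modulo the small slip in the description of $\cI_2$: the common point of the path intervals cannot be the same point $0$ that all the $K$-intervals contain, or nothing is destroyed; but the intended ``mirror'' construction is clear). The real problem is the last coordinate. The task you leave to $\cI_1$ --- be an interval supergraph of $G$ that kills every ``hole'' $up_{a_u+1}$ --- is in general \emph{impossible}, so the crux you flag is not just delicate, it fails.

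Here is a small instance where no such $\cI_1$ exists. Take $K=\{u,v\}$, $P=p_1p_2p_3p_4$, with $u$ adjacent to $p_1,p_3$ (not $p_2$) and $v$ adjacent to $p_2,p_4$ (not $p_3$). After your $\cI_2$ and $\cI_3$, the only surviving non-edges are $up_2$ and $vp_3$, so $\cI_1$ must be an interval supergraph of $G$ missing both. Let $I_u,I_v$ be the intervals of $u,v$; since $uv\in E(G)$ they meet, say at a point $c$. The interval of $p_2$ must avoid $I_u$, so it lies strictly to one side of $I_u$; by symmetry say to the right. Then $I_v$ must reach $p_2$, so its right endpoint exceeds that of $I_u$. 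Now $p_3$ must avoid $I_v$: if $p_3$ lies to the right of $I_v$ then $I_u$ cannot reach it (its right end is smaller), contradicting $up_3\in E(G)$; so $p_3$ lies strictly to the left of $I_v$, in particular to the left of $c$. But then $p_2$ is strictly to the right of $c$ and $p_3$ strictly to the left of $c$, so the path edge $p_2p_3$ is lost. Hence no interval graph can play the role of your $\cI_1$. (The parity/residue idea does not help: once two hole-middles are adjacent on $P$, putting them on opposite sides of the clique point forces the path edge between them to cross the clique point, and then one of the two clique vertices hits its own hole-middle.)

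The paper's proof avoids this obstruction by \emph{not} assigning each coordinate a separate job. It builds a genuinely three-dimensional picture: the path is laid out as a rising staircase (even steps extruded along one horizontal axis, odd steps along the other, heights increasing with the index), and each $u\in K$ is a box hanging down from a common top corner to a bottom corner $c_u$ determined by its window on $P$. The ``skip the middle'' case is handled because the middle step is extruded along the \emph{other} horizontal axis than the two ends, so a box that is flat in that horizontal direction can touch both ends while missing the middle --- but this relies on the $z$-direction (the one carrying the endpoint condition) simultaneously cutting off everything below the window. In short, the three coordinates cooperate on every adjacency rather than being decoupled; your decomposition forces one interval graph to do something an interval graph cannot do.
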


\begin{figure}[htbp]
\begin{center}
\includegraphics[scale=1.2]{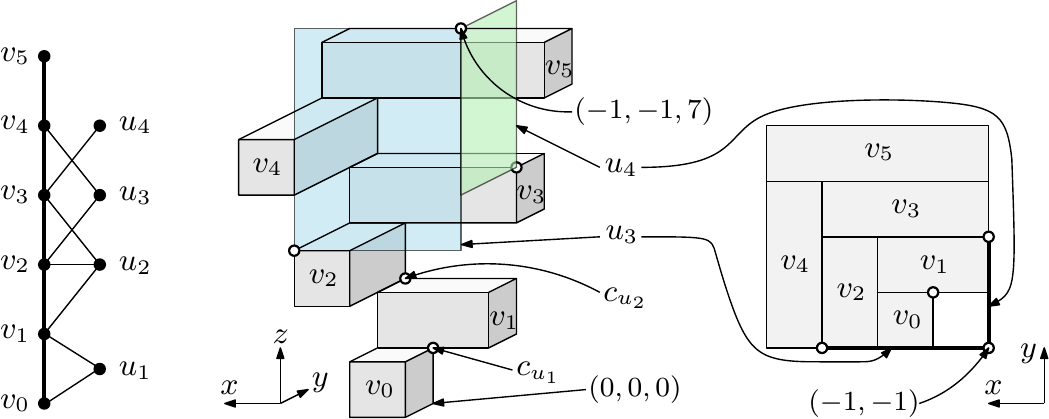}
\caption{The point $(-1,-1,7)$ and the bottom corners $c_{u_i}$ of the
  vertices $u_i$ are depicted with white dots. For the sake of
  readability, the 3-boxes of $u_1$ and $u_2$ are not displayed (only
  two of their corners are depicted). \label{fig:kp}}
\end{center}
\end{figure}

\begin{proof}
We first construct a 3-box representation of $G$, and then show how to
slightly modify it so that it satisfies the additional constraint on $\cI_3$.

  Let $P=v_0,v_1,\ldots,v_p$. For every $i\ge 0$, $v_{2i}$ is mapped
  to the 3-box $[i,i+1]\times [-1,i]\times [2i,2i+1]$ and $v_{2i+1}$
  is mapped to the 3-box $[-1,i+1]\times [i,i+1]\times [2i+1,2i+2]$
  (see Figure~\ref{fig:kp}, where both a 3-dimensional view and a
  2-dimensional view from above are depicted for the sake of clarity). Let $u$ be a vertex of $K$. Then $u$ is
  mapped to the 3-box with corners $(-1,-1,p+2)$ and $c_u$, where
  $c_u$ is defined as follows. If $u$ has no neighbor in $P$, then
  $c_u=(-1,-1,p+2)$. If $u$ has a single neighbor $v_j$ in $P$, then
  either $j=2i$ and we define $c_u=(i,-1,2i+1)$, or $j=2i+1$ and we
  define $c_u=(-1,i,2i+2)$ (see for example the 3-box of $u_4$ in
  Figure~\ref{fig:kp}). If the neighbors of $u$ are two consecutive
  vertices of $P$, say $v_{2i}$ and $v_{2i+1}$ (the case
  $v_{2i+1}, v_{2i+2}$ can be handled analogously by switching the
  roles of the $x$- and $y$-axis), then we set $c_u=(i,i,2i+1)$ (see
  for example the bottom corner $c_{u_1}$ of $u_1$ in
  Figure~\ref{fig:kp}). If the neighbors of $u$ in $P$ are $v_{2i}$
  and $v_{2i+2}$, then $c_u=(i,-1,2i+1)$ (see for example the 3-box of
  $u_3$ in Figure~\ref{fig:kp}). The case where the neighbors of $u$
  in $P$ are $v_{2i+1}$ and $v_{2i+3}$ is handled analogously by
  switching the roles of the $x$- and $y$-axis. Finally, if the
  neighbors of $u$ in $P$ are $v_{2i},v_{2i+1}, v_{2i+2}$, for some
  $i$, then we set $c_u=(i+1,i,2i+1)$. Again, the case where the
  neighbors of $u$ in $P$ are $v_{2i+1}, v_{2i+2},v_{2i+3}$ is handled
  analogously by switching the roles of the $x$- and $y$-axis (see for
  example the bottom corner $c_{u_2}$ of $u_2$ in
  Figure~\ref{fig:kp}).

  All the 3-boxes of the vertices of $K$ contain the point
  $(-1,-1,p+2)$, so $K$ induces a complete graph in the representation
  defined above. Moreover, it readily follows from the definition of
  the 3-boxes of the vertices $v_i$ and the corners $c_u$ that for
  each vertex $u$ of $K$, the neighbors of $u$ in $P$ are precisely
  the same in the graph $G$ and in the 3-box representation defined
  above. Consequently, the constructed representation induces $G$,
  as desired.

\smallskip

Let $\cI_1,\cI_2,\cI_3$ be the three interval graphs corresponding
respectively to the $x$-, $y$-, and $z$-axis in the representation
above. It follows from the construction of $\cI_3$ that all the
vertices $v \in K$ are mapped in $\cI_3$ to an interval of the form
$[i_v,p+2]$, while all the vertices of $P$ are mapped in $\cI_3$ to intervals
ending strictly before $p+2$. It is then easy to translate the whole
representation along the $z$-axis so that it satisfies the additional property.
\end{proof}

Finally, we will need the following direct consequence of
Theorem~\ref{thm:tho45v2}.

\begin{cor}\label{cor:plafixed}
  Let $G$ be a planar graph, let $w$ be a fixed vertex of $G$, and let
  $t\in \mathbb{R}$. Then
  $G$ has a strict 3-box representation $(\cI_1,\cI_2,\cI_3)$ such
  that the interval $I_w$
  of $w$ in $\cI_3$ ends at $t$, while all the other intervals of
  $\cI_3$ are contained in $[t,+\infty)$.
\end{cor}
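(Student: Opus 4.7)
The plan is to reduce the corollary to Theorem \ref{thm:tho45v2} by augmenting $G$ to a planar triangulation with $w$ on the outer face, and then choosing the three starting boxes of the outer triangle so that $w$'s third interval ends at $t$ and the empty inner corner sits in the half-space $z\ge t$.

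First I would embed $G$ in the plane so that $w$ lies on the outer face (always possible, since any face of a planar embedding can be chosen as outer face and $w$ lies on the boundary of some face). I then add two auxiliary vertices $u$ and $v$ in the outer region together with the edges $uv$, $uw$, and $vw$, drawn so that the new outer face is the triangle $uvw$ and all of $G$ lies in its interior. Following the triangulation trick used in the proof of Corollary \ref{cor:plasep}, I would then triangulate every remaining face (including the annular region between $uvw$ and the former outer walk of $G$) by inserting a new vertex inside and joining it to every vertex on its boundary. This yields a planar triangulation $T$ with outer face $uvw$ that contains $G$ as an induced subgraph.

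Next I fix explicit boxes for the outer triangle, namely
\begin{align*}
B_w &= [0,1]\times[0,1]\times[t-1,t],\\
B_u &= [-1,0]\times[-1,2]\times[t-1,t+2],\\
B_v &= [0,1]\times[-1,0]\times[t-1,t+2].
\end{align*}
A direct verification shows that these form a strict 3-box representation of the triangle $uvw$ and that, for every sufficiently small $\epsilon>0$, the small 3-box $C=[0,\epsilon]\times[0,\epsilon]\times[t,t+\epsilon]$ witnesses an empty inner corner of $uvw$ at the point $(0,0,t)$: the three intersections $C\cap B_w$, $C\cap B_u$, $C\cap B_v$ are exactly the three faces of $C$ at this corner (the faces $z=t$, $x=0$, and $y=0$, respectively), and the interior of $C$ is disjoint from the interiors of $B_u,B_v,B_w$. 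Crucially, $B_w$'s $\cI_3$-interval $[t-1,t]$ ends at $t$, and $C$ lies entirely in the half-space $z\ge t$.

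Applying Theorem \ref{thm:tho45v2} to $T$ with these starting boxes extends them to a strict 3-box representation of $T$ in which every box other than $B_u,B_v,B_w$ is contained in $C$, and therefore has its $\cI_3$-interval inside $[t,t+\epsilon]\subseteq[t,+\infty)$. Deleting the boxes of $u$, $v$, and of all the auxiliary triangulation vertices then leaves a strict 3-box representation of $G$, because $G$ is an induced subgraph of $T$ and strictness is manifestly preserved under removal of boxes. In the resulting representation $w$ still has $\cI_3$-interval $[t-1,t]$, while every other vertex of $G$ has its $\cI_3$-interval contained in $[t,+\infty)$, as required. The only non-routine step is the geometric verification that the specific $B_u,B_v,B_w$ above produce an empty inner corner in the upper half-space $z\ge t$; the remainder is bookkeeping.
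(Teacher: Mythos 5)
Your proof is correct and follows essentially the same route as the paper: reduce to a triangulation with $w$ on the outer face, choose the three outer boxes so that the outer triangle has an empty inner corner lying in the half-space $z\ge t$ with $B_w$'s third interval ending at $t$, and invoke Theorem~\ref{thm:tho45v2} to confine all remaining boxes to that corner. The only (harmless) cosmetic differences are that you make the other two outer vertices auxiliary and delete them at the end, and that you write explicit coordinates instead of translating a representation as in Figure~\ref{fig:corner} afterwards.
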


\begin{proof}
We can assume without loss of generality that $G$ is a triangulation (since
it is an induced subgraph of some triangulation), and that $w$ belongs
to the outerface of $G$. We first map the three vertices of the outerface to 3-boxes as in
Figure~\ref{fig:corner}, and then apply Theorem~\ref{thm:tho45v2} to
extend this representation to a strict 3-box representation of $G$,
such that the boxes of all the internal vertices are inside the inner
corner $C$. Note that some hyperplane separates the box
$B_w$ of $w$ from the boxes of all the other vertices of $G$, and the
representation can therefore be translated in $\mathbb{R}^3$ in order to satisfy the
desired property.
\end{proof}

We are now able to prove the main result of this section.

\begin{thm}\label{thm:linex1}
Let $G$ be a graph of Euler genus $g>0$. Then $G$ contains a set $X$ of
at most
$60 g-30$ vertices such that $G-X$ has boxicity at most 5.
\end{thm}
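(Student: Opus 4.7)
The plan is to combine a face-width reduction with the Kawarabayashi--Thomassen decomposition of~\cite{KT12}, and then construct a 5-box representation of the remaining graph using the tools developed earlier in this section. First, I fix a cellular embedding of $G$ in a surface of Euler genus $g$, and iteratively reduce the face-width: while the face-width of the current graph is at most $30$, I find a non-contractible closed curve meeting the graph in at most $30$ vertices, add those vertices to $X$, and remove them, producing a graph embedded on a surface of strictly smaller Euler genus. This procedure halts after at most $g-1$ iterations; either we reach a planar graph (which has boxicity at most $3$, and we are done with $|X|\le 30(g-1)\le 60g-30$), or we obtain a graph of face-width greater than $30$ on a surface of Euler genus at most $g$. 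In total, this phase contributes at most $30(g-1)$ vertices to $X$.

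Next, I apply Theorem~1 of~\cite{KT12} with parameter $t=3$ to the resulting graph $G_0$, obtaining a partition $V(G_0)=A\cup P\cup X_1$ with $|X_1|\le 30g$, where $A$ induces a planar graph with plane embedding $H$, $P$ is a disjoint union of paths which are local geodesics pairwise at distance at least $3$ in $G_0$, and only the outer-face vertices of $H$ (call this set $W$) have neighbors in $P$. Adding $X_1$ to $X$, I get $|X|\le 30(g-1)+30g=60g-30$. Two structural consequences will be crucial: since the paths of $P$ are pairwise at distance at least $3$, each vertex of $A$ has neighbors in at most one path; and by the local geodesic property together with $t=3$, the neighbors of any $w\in W$ in a path $P_i$ lie in a subpath of at most $3$ consecutive vertices of $P_i$, which is precisely the hypothesis of Lemma~\ref{lem:tech}.

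It remains to show that $G[A\cup P]$ has boxicity at most $5$. My construction is as follows. Let $A^+$ be the planar graph obtained from $A$ by adding a new vertex $p$ in the outer face of $H$, adjacent to every vertex of $W$. Applying Corollary~\ref{cor:plafixed} to $A^+$ with distinguished vertex $p$, I obtain a strict 3-box representation $(\cI_1,\cI_2,\cI_3)$ of $A^+$ in which $p$'s $\cI_3$-interval ends at some $t_0$ and every other $\cI_3$-interval lies in $[t_0,+\infty)$. I then replace the 3-box of $p$ by a cluster of 3-boxes representing the vertices of $P$, using a multi-path variant of Lemma~\ref{lem:tech} applied to $G[W\cup P]$ with $W$ treated as a clique; this extension is valid because each vertex of $W$ has neighbors in at most one path, so in three dimensions the $P$-cluster can be split into independent per-path pieces placed in disjoint slots of the $\cI_3$-axis below $t_0$. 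The construction yields two additional interval graphs $\cI_4$ and $\cI_5$ that encode the intra-$P$ path edges and the precise $W$-$P$ adjacencies. To verify that $(\cI_1,\ldots,\cI_5)$ represents $G[A\cup P]$, one checks: the $A$-$A$ adjacencies are captured by $(\cI_1,\cI_2,\cI_3)$ via the representation of $A^+$; the intra-$P$ edges and the $W$-$P$ edges are captured by $(\cI_3,\cI_4,\cI_5)$ via the multi-path Lemma~\ref{lem:tech}; and the non-edges between $A\setminus W$ and $P$ are separated by $\cI_3$ alone, since the $\cI_3$-intervals of $A\setminus W$ lie in $[t_0,+\infty)$ while those of $P$ lie strictly below $t_0$.

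The main obstacle will be this last step: extending Lemma~\ref{lem:tech} to the multi-path setting and carefully aligning its $\cI_3$-axis with that of Corollary~\ref{cor:plafixed}, so that the substitution of $p$'s 3-box by the $P$-cluster simultaneously yields a valid strict 3-box representation in $(\cI_1,\cI_2,\cI_3)$ and the Lemma~\ref{lem:tech}-type structure in $(\cI_3,\cI_4,\cI_5)$. This will require careful coordination of coordinates across the two constructions, but is made tractable by the flexibility afforded by Theorem~\ref{thm:tho45v2} (which lets us prescribe a small outer 3-box region) together with the freedom to choose the threshold $t$ in Lemma~\ref{lem:tech}.
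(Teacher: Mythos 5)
Your overall strategy matches the paper's: reduce to face-width $>30$, apply the Kawarabayashi--Thomassen partition with $t=3$, represent $A$ via an apex vertex $v^+$ and Corollary~\ref{cor:plafixed}, represent $O\cup P$ via Lemma~\ref{lem:tech}, and glue along the shared axis $\cI_3$. But you have left the one genuinely technical step --- handling \emph{several} paths with a lemma stated for \emph{one} path --- as an acknowledged ``main obstacle,'' and the fix you sketch does not work as described. If you place the paths $P_1,\ldots,P_k$ in disjoint slots of the $\cI_3$-axis below $t_0$, then a vertex $w\in W$ with a neighbour in a low slot has a $\cI_3$-interval sweeping through every intermediate slot, so $\cI_3$ no longer separates $w$ from those paths; the burden falls entirely on $(\cI_4,\cI_5)$. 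But in the construction of Lemma~\ref{lem:tech} the $z$-axis is doing essential work: a clique vertex attached near $v_{2i}$ is separated from the path vertices \emph{before} $v_{2i}$ only by the third coordinate (and from those \emph{after} only by the first). So ``independent per-path pieces'' cannot be verified per path; the slots and the per-path staircases must be globally ordered consistently. The paper's resolution is cleaner and requires no multi-path variant at all: form a single path by stringing $P_1,\ldots,P_k$ together with dummy degree-two linking vertices $v_i$ joining the right endpoint of $P_i$ to the left endpoint of $P_{i+1}$, add all edges inside $O$, and apply Lemma~\ref{lem:tech} verbatim. The hypothesis is preserved because each $w\in O$ has neighbours in only one $P_i$, lying in a subpath of at most $3$ vertices. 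Your argument needs this (or an equivalent globally coordinated construction) spelled out; as written, the non-edges between $W$ and the ``intermediate'' paths are not accounted for.

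A second, smaller issue is your face-width reduction. Cutting along a non-contractible curve through at most $30$ vertices need not leave a connected graph of smaller genus: the graph may split into two pieces of positive Euler genera $g_1+g_2=g$ (Proposition 4.2.1 and Lemma 4.2.4 of~\cite{MoTh}), so your process branches, and the flat count ``at most $g-1$ iterations, hence at most $30(g-1)$ vertices in phase one'' is false in general (a cascade of splits followed by reductions can consume up to $30(2g-1)=60g-30$ vertices in phase one alone, in the case where everything ends up planar). The bound $60g-30$ does still hold, but you should prove it by induction on $g$ with the recurrence $f(g)\le\max\{30+f(g-1),\;30+f(g_1)+f(g_2),\;30g\}$, which is exactly how the paper organizes the argument; the additive constant $-30$ is what makes the splitting case close.
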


\begin{proof}
 We prove the theorem by induction on $g>0$. If $G$ has face-width at
  most $30$, then $G$ contains a set $X$ of at most $30$ vertices such
  that $G-X$ has Euler genus at most $g-1$, or $G-X$ is the disjoint
  union of two graphs of Euler genus $g_1>0$ and $g_2>0$ with $g_1+g_2=g$  (see Proposition
  4.2.1 and Lemma 4.2.4 in~\cite{MoTh}). In the first case, either
  $G-X$ is planar (in which case the result clearly holds,
  since $G-X$ has boxicity at most 3 and $60g-30\ge 30$), or by the
  induction, a set $X'$ of at most $30+60 (g-1)-30\le 60g-30$ can be removed from $G$ in
  order to obtain a graph with boxicity at most 5. In the second case, by the
  induction, a set $X'$ of at most $30+(60 g_1-30)+(60 g_2-30)\le 60g-30$ can be removed from $G$ in
  order to obtain a graph with boxicity at most 5. As a consequence,
  we can assume that $G$ has face-width at least $30$, and apply the
  result of Kawarabayashi and Thomassen mentioned above, with
  $t=3$. 

Let $A,P,X$ be the corresponding partition of the vertex-set
  of $G$ (and let $H$ be the planarly embedded subgraph of $G$ induced
  by $A$, such that only the outerface $O$ of $H$ has neighbors in
  $P$). Note that $X$ contains at most $30 g \le 60g-30$ vertices, and we will
  prove that $G-X=G[P\cup A]$, the subgraph of $G$ induced by $A$ and $P$,
  has boxicity at most 5.

Let $H^+$ be the planar graph obtained from $H$ by adding a new vertex
$v^+$ adjacent to all the vertices of $O$. By
Corollary~\ref{cor:plafixed} (with $w=v^+$), $H^+$ has a strict 3-box representation
$(\cI_1,\cI_2,\cI^+_3)$ such that for some real number $p^+$, all the
intervals of $\cI^+_3$ corresponding to some vertex of $O$ start at
$p^+$, while all the intervals of $\cI^+_3$ corresponding to some vertex
of $A\setminus O$ start (strictly) after $p^+$.

Let $v$ be a vertex of $H$. Since the paths of $P$ are local
geodesics, and any two paths are at distance at least 3 apart, $v$ has at most 3 neighbors in $P$
and these neighbors lie on a subpath of at most 3 vertices of a path
of $P$ (i.e. they are either consecutive or at distance two on some
path of $P$). Let $P_1, P_2,\ldots,P_k$ be the paths of $P$, and for
each $1\le i \le k$, consider the two endpoints of $P_i$ and decide
arbitrarily which one is the left endpoint and which one is the right
endpoint. Let $\tilde{H}$ be the graph obtained from $G[P\cup O]$ by
adding, for each $1\le i < k$, a vertex $v_i$ adjacent (only) to the
right endpoint of $P_i$ and the left endpoint of $P_{i+1}$, and by
adding an edge between any two (non-adjacent) vertices of $O$. By
Lemma~\ref{lem:tech}, $\tilde{H}$ has a 3-box representation
$(\cI_4,\cI_5,\cI^-_3)$ such that the intervals of $\cI^-_3$ either end at
$p^+$ (if they correspond to a vertex of $O$), or end strictly before
$p^+$ (if they correspond to a vertex of $P$).
The restriction of $(\cI_4,\cI_5,\cI^-_3)$
to $P\cup O$ induces a 3-box representation of the graph obtained from
$G[P\cup O]$ by adding an edge between any two 
(non-adjacent) vertices of $O$, with $\cI^-_3$ satisfying the same
additional property as above.

Let $\cI_3$ be the interval representation obtained from $\cI^+_3$ and
$\cI^-_3$ as follows. Every vertex of $A\setminus O$ is mapped to its
image in $\cI^+_3$, every vertex of $P$ is mapped to its image in
$\cI^-_3$, and every vertex of $O$ is mapped to the concatenation of
its images in $\cI^-_3$ and
$\cI^+_3$ (note that the former ends at $p^+$ and the latter starts at
$p^+$). Note that the adjacency between $O$ and $P$ is the same in
$\cI_3$ and $\cI^-_3$, and the adjacency between $O$ and $A\setminus
O$ is the same in
$\cI_3$ and $\cI^+_3$. Moreover, the intervals of $A\setminus O$ are
disjoint from the intervals of $P$ in $\cI_3$.

It remains to map every vertex of $P$ in $(\cI_1,\cI_2)$ to a large
2-box containing all the other 2-boxes of $(\cI_1,\cI_2)$, and to map
every vertex of $A\setminus O$ in $(\cI_4,\cI_5)$ to a large
2-box containing all the other 2-boxes of $(\cI_4,\cI_5)$. Let
$\cR=(\cI_1,\cI_2,\cI_3,\cI_4,\cI_5)$. Note that the restrictions to
$A$ of
$\cR$, $(\cI_1,\cI_2,\cI_3)$, and $(\cI_1,\cI_2,\cI^+_3)$, all 
induce the same graph (namely, $G[A]$). Similarly, the restrictions to
$O \cup P$ of $\cR$, $(\cI_3,\cI_4,\cI_5)$, and $(\cI^-_3,\cI_4,\cI_5)$, all induce the same graph (namely, $G[O \cup P]$). Since the intervals of $A\setminus O$ are
disjoint from the intervals of $P$ in $\cI_3$, there are no edges
between $P$ and $A\setminus O$ in $\cR$. It follows that $R$
represents $G[A \cup P]=G-X$, as desired.
\end{proof}

\section{Large girth graphs}\label{sec:girth}

The previous sections were devoted to graphs embedded in fixed
surfaces, without short non-contractible cycles. Here we consider
graphs without short cycles at all. Using the results of~\cite{ACS14}
relating the boxicity of a graph and its second largest eigenvalue (in
absolute value), together with the existence of Ramanujan graphs of
arbitrarily large degree and girth~\cite{LPS88}, it directly follows that there is
a constant $c>0$ such that for any integers $d$ and $g$, there is a
$k$-regular graph ($k\ge d$) of girth at least $g$ and boxicity at
least $ck/\log k$. As a consequence, there are (regular) graphs with arbitrarily large girth
and boxicity. Therefore, in order to bound the boxicity of graphs without short
cycles, it is necessary to restrict ourselves to specific classes of
graphs. 

\smallskip

Let $p\ge 1$ be an integer. A graph $G$ is said to be
\emph{$p$-path-degenerate} (see~\cite{NO15}) if any subgraph $H$ of $G$ contains a vertex of degree
at most 1, or a path with $p$ internal vertices, each having degree
two in $H$. 

\smallskip

A 3-box representation of a graph $G$ is called a \emph{3-segment
  representation} if (1) each vertex is mapped to a \emph{segment}, (the
cartesian product of two points and an interval of positive length), (2)
the interiors of any two segments are disjoint (in other words, the interior of a
segment is only intersected by endpoints of other segments), and (3) no
two segments lie on the same line. We will prove the following result:

\begin{thm}\label{thm:6path}
Any 5-path-degenerate graph $G$ has a 3-segment representation.
\end{thm}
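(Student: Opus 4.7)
The plan is to proceed by induction on $|V(G)|$. The base case is trivial: place pairwise disjoint segments. For the inductive step, the $5$-path-degeneracy hypothesis provides either a vertex $v$ of degree at most $1$, or a path $uv_1v_2v_3v_4v_5w$ whose internal vertices $v_1,\ldots,v_5$ have degree exactly two in $G$.

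If $v$ has degree at most $1$, apply induction to $G-v$. In the isolated case, place $v$'s segment far from everything. Otherwise, pick a point $p$ in the interior of the segment of $v$'s unique neighbour that avoids the finitely many intersection points with other segments, and place $v$ as a short segment through $p$ oriented along one of the two axis directions perpendicular to the neighbour, chosen so that $v$ meets no other segment and lies on no line already occupied.

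The path case is the heart of the proof. Apply induction to $G'=G-\{v_1,\ldots,v_5\}$ to obtain a $3$-segment representation $\cR'$, and let $K=[-M,M]^3$ be a cube containing every segment of $\cR'$. I insert $v_1,\ldots,v_5$ as an axis-parallel staircase that leaves $K$ through $v_1$, traverses the exterior of $K$ with $v_2,v_3,v_4$, and re-enters $K$ through $v_5$. Pick a point $p_u$ in the interior of $u$'s segment avoiding the intersections with other segments, and choose one of the two axis directions perpendicular to $u$ so that the ray from $p_u$ in that direction meets no other segment before exiting $K$; a generic $p_u$ admits such a direction, since the set of bad starting points is a finite union of proper closed subsets of $u$'s interior. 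Take $v_1$ to be the segment from $p_u$ to just beyond this exit point. Outside $K$ the existing representation is empty, so $v_2,v_3,v_4$ can be placed freely: choose each $v_{i+1}$ perpendicular to $v_i$ and use the three successive perpendicular turns to independently adjust each of the three coordinates, so that the endpoint of $v_4$ lies on the axis-parallel line re-entering $K$ at a chosen free point $p_w$ on the interior of $w$'s segment; the direction of re-entry is one of the two axis directions perpendicular to $w$, selected together with $p_w$ so that the re-entry segment $v_5$ meets no segment of $\cR'$ other than $w$. Choosing the staircase $v_2,v_3,v_4$ to be monotone keeps all five new segments pairwise disjoint outside the prescribed shared endpoints, and a small generic perturbation of the chosen coordinates prevents any two segments from lying on a common line.

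The count $5=1+3+1$ is tight for this scheme: one segment to leave $u$, three to adjust the three coordinates in the exterior of $K$, and one to return to $w$. The main obstacle I anticipate is checking that the axis assignments along $v_1,\ldots,v_5$ can always be made compatibly: consecutive segments must use distinct axes, $v_1$ and $v_5$ must be perpendicular to $u$ and $w$ respectively, and the monotone staircase outside $K$ must actually end on a line from which $v_5$ re-enters $K$ to $p_w$. This reduces to a short case analysis on the axes of $u$ and $w$, exploiting the two-way freedom available at both $v_1$ and $v_5$ and the freedom in the axis sequence chosen for $v_2,v_3,v_4$ in the exterior of $K$.
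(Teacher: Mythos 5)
Your overall strategy --- induction on the number of vertices, the two cases supplied by $5$-path-degeneracy, and a staircase of five axis-parallel segments joining the two attachment vertices --- is the same as the paper's, and the degree-at-most-one case is fine. The gap is in the step where $v_1$ escapes the bounding cube $K$. You claim that for a generic point $p_u$ in the interior of $S_u$ some axis direction perpendicular to $S_u$ gives a ray that reaches the boundary of $K$ without meeting any other segment, ``since the set of bad starting points is a finite union of proper closed subsets of $u$'s interior.'' This is not true. Fix a perpendicular axis direction, say $+z$ with $S_u$ parallel to the $x$-axis: a single segment of the inductive representation that is parallel to $S_u$, lies in the plane spanned by $S_u$ and the $z$-direction, sits at $z>0$, and whose $x$-projection contains that of $S_u$, blocks this direction for \emph{every} starting point, so the bad set is the whole interior of $S_u$, not a proper subset. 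Four such segments (one for each of the rays $\pm y$, $\pm z$) form a cage around $S_u$ from which no perpendicular axis-parallel ray escapes, and nothing in the induction hypothesis forbids this configuration (the four cage segments represent non-neighbours of $u$ and are perfectly legal). The underlying problem is that your escape segment has only one continuous degree of freedom (the base point $p_u$) plus finitely many direction choices, and a one-parameter family of long axis-parallel segments cannot in general be made to miss a finite collection of segments. (Even granting the claim literally, a finite union of proper closed subsets of an interval can cover the interval.) The same objection applies to the re-entry segment $v_5$.

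The paper sidesteps this by never sending $v_1$ and $v_5$ far from their attachment points: it chooses balls around $p_0\in S_{v_0}$ and $p_6\in S_{v_6}$ that meet no other segment, takes $v_1$ and $v_5$ to be tiny segments of length $\epsilon$ inside those balls, and realizes $v_2,v_3,v_4$ as three edges of the axis-parallel box spanned by the far endpoints of $v_1$ and $v_5$. The three continuous parameters ($p_0$, $p_6$, $\epsilon$) then make each of the long legs $v_2,v_3,v_4$ sweep out a non-degenerate three-dimensional locus, so for each fixed existing segment the set of bad parameters has positive codimension and a collision-free choice exists. To rescue your ``leave the cube'' routing you would need comparable continuous freedom for $v_1$ and $v_5$, or a strengthened induction hypothesis guaranteeing an unobstructed escape direction; neither is present in the proposal.
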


\begin{figure}[htbp]
\begin{center}
\includegraphics[scale=1.4]{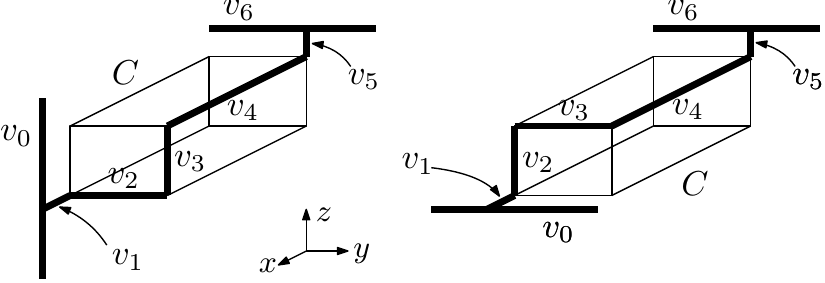}
\caption{The representation of a path with 5 internal vertices of degree
  2 between $v_0$ and $v_6$ when $S_{v_0}$ and $S_{v_6}$ are parallel to
  different axes (left)
  and when $S_{v_0}$ and $S_{v_6}$ are parallel to the same axis (right). \label{fig:5path}}
\end{center}
\end{figure}

\begin{proof}
We will prove the result by induction on
the number of vertices of $G$. Assume first that $G$ contains a vertex
$v$ of degree at most 1, and let $H=G-v$. Note that $H$ is
5-path-degenerate, so by the induction it has a 3-segment
representation $\cS=(S_v)_{v \in H}$. If $v$ has degree 0, then $G$ is the disjoint union of $H$
and $\{v\}$, and clearly has a 3-segment representation. Thus, we can
assume that $v$ has a unique neighbor $u$ in $G$. Since $\cS$
contains a finite
number of segments, $S_u$ contains a point $p$ such that some small
ball $B$ centered in $p$ only intersects $S_u$. We then represent $S_v$ as a
segment orthogonal to $S_u$ (there are two possible choices of dimension), with $p$ as one endpoint, and such that
$S_v$ lies inside $B$.

In the remainder of the proof, we assume that $G$ contains a path
$P=v_0v_1\ldots v_6$, such that for any $1\le i \le 5$, the only
neighbors of $v_i$ in $G$ are $v_{i-1}$ and $v_{i+1}$. Let $H$ be the
graph obtained from $G$ by removing all the vertices $v_i$ with $1\le
i \le 5$, and let $\cS=(S_v)_{v \in H}$ be a 3-segment representation
of $H$. We now extend $\cS$ to the vertices $v_1,v_2,\ldots,v_5$. For
$i=0,6$, fix
a point $p_i \in S_{v_i}$ such that some small ball $B_i$ centered in
$p_i$ only intersects $S_{v_i}$. Assume without loss
of generality that either $S_{v_0}$ is parallel to the $z$-axis and
$S_{v_6}$ is parallel to the $y$-axis (this includes the case where $v_0$
and $v_6$ are adjacent), or both $S_{v_0}$ and $S_{v_6}$ are
parallel to the $y$-axis (this includes the case where $v_0$
and $v_6$ are the same vertex). Assume that $B_0$ and $B_6$ have
radius at least $\epsilon$, for some $\epsilon>0$. We map $v_1$ to the segment $S_{v_1}$ that has $p_0$ as an
endpoint, is parallel to the $x$-axis, has length $\epsilon$, and goes
in the direction of $p_6$ (parallel to the $x$-axis). Similarly, we map $v_5$ to the segment $S_{v_5}$ that has $p_6$ as an
endpoint, is parallel to the $z$-axis, has length $\epsilon$, and goes
in the direction of $p_0$ (parallel to the $z$-axis). Let $p_1$ be
the endpoint of $S_{v_1}$ distinct from $p_0$, and let $p_5$ be
the endpoint of $S_{v_5}$ distinct from $p_6$. Let $C$ be the 3-box
with corners $p_1$ and $p_5$. We map $v_2$ to the edge $S_{v_2}$
of $C$ which contains $p_1$ and is orthogonal to $S_{v_0}$ and $S_{v_1}$, and similarly we
map $v_4$ to the edge $S_{v_4}$
of $C$ which contains $p_5$ and is orthogonal to $S_{v_5}$ and $S_{v_6}$. Finally, we map
$v_3$ to the edge $S_{v_3}$ of $C$ connecting the endpoint of
$S_{v_2}$ distinct from $p_1$ and the endpoint of
$S_{v_4}$ distinct from $p_5$ (see Figure~\ref{fig:5path}). Note that the
description of $S_{v_i}$, $1\le i \le 5$, above only depends of the
choice of $p_0$, $p_1$, and $\epsilon$. We now move each of $p_0$, $p_1$ and the value of $\epsilon$ along a tiny
interval. Then the locus described by each $S_{v_i}$, $1\le i \le 5$,
is a non-degenerate 3-box. Since $\cS$ is the union of a finite number
of segments, it follows that we can choose $p_0$, $p_1$, and
$\epsilon>0$ so that the $S_{v_i}$ ($1\le i \le 5$) are disjoint from
$\cS$. We can moreover choose $p_0$, $p_1$, and
$\epsilon>0$ so that $C$ is a non-degenerate 3-box (and so the $S_{v_i}$
are (non-degenerate) segments) and no segment $S_{v_i}$ lie on the
same line as a segment of $\cS$. Consequently, the obtained representation
is a 3-segment representation and each vertex $v_i$ with $1\le i \le 5$ is mapped to a segment
that only intersects the segments of $v_{i-1}$ and $v_{i+1}$, as desired.
\end{proof}

It was proved by Galluccio, Goddyn and Hell~\cite{GGH01} that for any
proper minor-closed class $\cF$ and for any $k$, there is an integer
$g=g(k)$ such that any graph of $\cF$ with girth at least $g$ is
$k$-path-degenerate. Since 3-segment representations are 3-box
representations, we have the following immediate consequence.

\begin{cor}\label{cor:minor}
For any proper minor-closed class $\cF$ there is an integer $g=g(\cF)$
such that any graph of $\cF$ of girth at least $g$ has boxicity at
most 3.
\end{cor}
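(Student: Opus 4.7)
The plan is to deduce the corollary directly by combining Theorem~\ref{thm:6path} with the structural result of Galluccio, Goddyn and Hell~\cite{GGH01} quoted in the sentence immediately preceding the statement. The strategy is to use large girth inside a minor-closed class as a trigger for $5$-path-degeneracy, and then use $5$-path-degeneracy as a trigger for the existence of a $3$-segment representation, which by definition is a $3$-box representation.

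In more detail: fix a proper minor-closed class $\cF$. Applying the theorem of Galluccio, Goddyn and Hell with $k = 5$ yields an integer $g = g(\cF)$ such that every graph $G \in \cF$ of girth at least $g$ is $5$-path-degenerate. Fix such a $G$. By Theorem~\ref{thm:6path}, $G$ admits a $3$-segment representation $\cS = (S_v)_{v \in V(G)}$. Since the paper defines a $3$-segment representation to be a $3$-box representation whose boxes happen to be segments (each segment being the cartesian product of two points and an interval of positive length, and thus in particular a cartesian product of three intervals), $\cS$ is a $3$-box representation of $G$. Hence $\bo(G) \le 3$, and as $G$ was arbitrary this proves the corollary with the chosen $g(\cF)$.

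The main obstacle of the whole argument has already been overcome in Theorem~\ref{thm:6path}, where a $3$-segment representation is built inductively with some care at the step that consumes a path with five internal vertices of degree two. Once that theorem is granted, the corollary requires essentially no new ideas beyond citing~\cite{GGH01} and observing the trivial inclusion of $3$-segment representations in the class of $3$-box representations, so the proof itself should be only a few lines long.
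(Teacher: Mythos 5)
Your proposal is correct and is exactly the argument the paper intends: apply the Galluccio--Goddyn--Hell theorem with $k=5$ to obtain $g(\cF)$, invoke Theorem~\ref{thm:6path} on the resulting $5$-path-degenerate graph, and note that a $3$-segment representation is in particular a $3$-box representation. The paper presents this as an immediate consequence in precisely the same way, so there is nothing to add.
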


Note that the result of  Galluccio, Goddyn and Hell was recently
extended by Ne\v{s}et\v{r}il and Ossona de Mendez~\cite{NO15} to
classes  of subexponential expansion, i.e. expansion bounded by $d
\mapsto \exp(d^{1-\epsilon})$, for some $\epsilon>0$ (see~\cite{NO15}
for definitions and further details). This shows that
Corollary~\ref{cor:minor} can be extended to this fairly broad setting
as well.

\smallskip

Interestingly, the bound on the boxicity in Corollary~\ref{cor:minor} is best possible already for the class of
$K_6$-minor free graphs. The following example was given by St\'ephan Thomass\'e. Take a copy of $K_5$, the complete
graph on 5 vertices, and replace each edge by an arbitrarily large
path. The resulting graph has arbitrarily large girth, no $K_6$-minor,
and any 2-box representation of it would give a planar embedding
(without crossings) of $K_5$, a contradiction.

\smallskip

For graphs of Euler genus $g$, Theorem 3.2 in~\cite{GGH01} (combined
with Theorem~\ref{thm:6path}) implies the following interesting counterpart of Theorem~\ref{thm:locpla}
(see the difference between the exponential bound there and the logarithmic
bound here).

\begin{cor}\label{cor:girth}
There is a constant $c$ such that any graph of Euler genus $g$ and
girth at least $c \log g$ has boxicity at
most 3.
\end{cor}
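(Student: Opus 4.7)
The plan is to obtain Corollary~\ref{cor:girth} as a direct combination of Theorem~\ref{thm:6path} (proved above) with Theorem~3.2 of Galluccio, Goddyn and Hell~\cite{GGH01}. The latter, specialised to the (proper minor-closed) class of graphs of Euler genus at most $g$, furnishes a constant $c>0$ such that every graph of Euler genus $g$ with girth at least $c\log g$ is $5$-path-degenerate: every subgraph of it contains either a vertex of degree at most $1$ or a path with $5$ consecutive internal vertices of degree $2$. Feeding such a graph $G$ into Theorem~\ref{thm:6path} yields a 3-segment representation, in particular a 3-box representation, and hence $\bo(G)\le 3$.

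For the reader's convenience I would also sketch why~\cite[Theorem~3.2]{GGH01} gives a logarithmic, rather than linear, girth bound in the surface case. Let $H$ be any subgraph of $G$, and suppose towards a contradiction that $H$ has minimum degree at least $2$ and contains no path of $5$ internal degree-$2$ vertices. Suppressing all maximal degree-$2$ paths of $H$ produces a multigraph $H'$ of the same Euler genus, of minimum degree at least $3$, each of whose edges corresponds to a path of $H$ of length at most $6$. Euler's formula applied to $H'$ gives $|E(H')|\le 3|V(H')|+3g-6$, which together with the degree-sum bound $2|E(H')|\ge 3|V(H')|$ forces $|V(H')|=O(g)$ and $|E(H')|=O(g)$. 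Consequently $H$ itself has $O(g)$ edges. Combining this with the face-length lower bound $\mathrm{girth}(H)\ge c\log g$, via Euler's formula applied to $H$, produces a contradiction as soon as $c$ exceeds a suitable absolute constant.

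The only real obstacle is therefore bookkeeping: one must invoke~\cite[Theorem~3.2]{GGH01} with the path-length parameter equal to $5$ (to match the hypothesis of Theorem~\ref{thm:6path}) and read off the corresponding constant $c$. Once this is done, Theorem~\ref{thm:6path} closes the argument without further work, and the resulting bound is best possible up to the value of $c$ since by~\cite{ACS14,LPS88} there exist graphs of arbitrarily large girth and boxicity.
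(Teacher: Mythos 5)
Your main argument is exactly the paper's: Corollary~\ref{cor:girth} is stated there as a direct consequence of Theorem~3.2 of~\cite{GGH01} (which gives the logarithmic girth threshold for $5$-path-degeneracy on surfaces) together with Theorem~\ref{thm:6path}, and your first paragraph reproduces this correctly. The only issue is in your optional sketch of why the threshold is logarithmic: the inequalities $|E(H')|\le 3|V(H')|+3g-6$ and $2|E(H')|\ge 3|V(H')|$ point the same way and do \emph{not} force $|E(H')|=O(g)$ (substituting gives only $-|E(H')|\le 3g-6$). To bound $|E(H')|$ you must use that $H'$ itself has girth at least $\tfrac{c}{6}\log g$ (each suppressed edge is a path of length at most $6$), so that Euler's formula gives $|E(H')|\le \tfrac{k}{k-2}\bigl(|V(H')|+g-2\bigr)$ with $k\ge 7$, whence $|E(H')|\le 21g$; and the final contradiction comes not from Euler's formula applied to $H$ (which is vacuous for $g\ge 2$ when the minimum degree is only $2$) but from a Moore-type bound: a graph of minimum degree $3$ and girth $k$ has at least $3\cdot 2^{\lfloor (k-1)/2\rfloor}$ vertices, which is incompatible with $|V(H')|=O(g)$ once $c$ is large enough. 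Since the sketch is explicitly supplementary and the cited theorem carries the proof, this does not affect the validity of your argument for the corollary itself.
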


Observe that this is best possible up to the choice of the constant
$c$: for any integer $k$, there is a constant $c'=c'(k)$ and an infinite
family of graphs of (increasing) Euler genus $g$, girth at least $c' \log g$, and boxicity at
least $k$. This follows from the results of~\cite{ACS14} mentioned in
the introduction of this section, and the fact that the Ramanujan
graphs described in~\cite{LPS88} have girth logarithmic in their
number of vertices (and Euler genus linear in their number of
vertices, at least for $d$-regular graphs with $d\ge 7$).

\medskip

It is worth noting that Theorem~\ref{thm:6path} can also be applied to
classes that do not fit well in the framework of Ne\v{s}et\v{r}il and
Ossona de Mendez~\cite{NO15} (because their density is too high). Examples of such
classes include \emph{segment} or \emph{strings} graphs (intersection graphs of segments,
or strings in the plane), or \emph{circle} graphs (intersection graphs
of chords of a circle). For example, it can be proved
using Theorem~\ref{thm:6path} and the results of~\cite{EO} that every circle graph of girth at least 9 has
boxicity at most 3. This is in contrast with the existence of a circle
graph (indeed, a permutation graph) on $2n$ vertices with boxicity
$n$, for every $n\ge 1$ (see~\cite{Rob69}).

\section{Conclusion}\label{sec:ccl}

A natural problem is to find a counterpart of Theorem~\ref{thm:locpla}
for non-orientable surfaces. Non-orientable versions of
Theorem~\ref{thm:placc} exist~\cite{Yu97}, so the only problem when applying the
same arguments as in the
proof of Theorem~\ref{thm:locpla} to a graph embedded in a
non-orientable surface is that some of the cycles in the planarizing
collection might be one-sided, in which case the cycle $C$ together with
its neighborhood $N$ does not necessarily embed in a cylinder, but instead
on a M\"obius strip. As a consequence, these graphs are not necessarily
planar. However, using Lemma~\ref{lem:tech}, we can
prove that the graph obtained from the subgraph induced by $C\cup N$
by adding an edge between any two vertices of $N$, has boxicity at most
4 (just remove a vertex of $C$ and apply
Lemma~\ref{lem:tech}). Consequently, a proof along the lines of the proof
of Theorem~\ref{thm:locpla} easily shows that locally planar graphs
embedded on non-orientable surfaces have boxicity at most $4+2=6$.

\smallskip

It would be interesting to improve this bound, as well as that of
Theorem~\ref{thm:locpla}. It was conjectured in~\cite{EJ13} that locally planar graphs have
boxicity at most 3 (which would be best possible since there are
planar graphs of boxicity 3). We also conjecture the following
variant:

\begin{conj}
There is a constant $c>0$ such that in every graph embedded on a
surface of Euler genus $g$, at most $c g$ vertices can be removed so
that the resulting graph has boxicity at most 3.
\end{conj}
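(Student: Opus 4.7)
The plan is to follow the scheme that already yields Theorem~\ref{thm:linex1} while pushing the boxicity of the leftover graph down from $5$ to $3$. The induction on $g$ from the proof of Theorem~\ref{thm:linex1} can be reused verbatim: either $G$ has face-width at most some absolute constant, in which case a bounded number of vertices can be deleted to decrease the Euler genus (or to split off a simpler surface) and the inductive hypothesis applies, or the face-width exceeds that constant and the Kawarabayashi--Thomassen decomposition furnishes a partition $A, P, X$ of $V(G)$ with $|X| = O(g)$, where $G[A]$ is a planar graph $H$ with a plane embedding whose only vertices adjacent to $P$ lie on its outerface, and $P$ is a disjoint union of pairwise far local geodesic paths. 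The conjecture thereby reduces to showing that $G[A \cup P]$ has boxicity at most $3$.

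The attack I would pursue for this core step merges two ingredients from earlier in the paper. Theorem~\ref{thm:tho45v2} and Corollary~\ref{cor:plaframe} give substantial flexibility in choosing strict 3-box representations of planar graphs, and in particular every internal facial triangle can be made to have a prescribed empty inner corner. Theorem~\ref{thm:6path} and its proof show how to represent degree-$2$ path vertices by small segments routed through any sufficiently empty region of $\RR^3$. The idea is to build a strict 3-box representation of $H$ in which the boxes of the outerface $O$ are arranged around an exposed region of $\RR^3$, with an empty three-dimensional neighborhood at each vertex of $O$ sufficient to accommodate segment endpoints; then, for each path $P_i$ of $P$, one attaches its two endpoints to the prescribed neighbors in $O$ by small segments and routes the internal portion of $P_i$ through the exposed region exactly as in the proof of Theorem~\ref{thm:6path}. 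The spacing condition between paths and the local geodesic property should ensure that distinct paths can be routed through disjoint regions.

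The main obstacle is the construction of the required 3-box representation of $H$. Thomassen's theorem lets us prescribe the position of a single facial triangle on the outerface, but here we need a simultaneous geometric property at \emph{every} vertex of $O$: each $v\in O$ must have enough empty three-dimensional room to attach the endpoint of any path whose neighbors in $H$ are a specific small set of vertices of $O$ consecutive in the embedding, without those attachments creating unwanted intersections with boxes of other vertices of $A$. This seems to require a strengthening of Theorem~\ref{thm:tho45v2} that applies to the whole outerface at once rather than to a single triangle, and I expect this structural result to be the technical heart of any successful attack on the conjecture.

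A more accessible intermediate target, which should be within reach of current methods, is the analogous statement with $4$ in place of $3$. Applying Lemma~\ref{lem:tech} to the structure obtained by completing $O$ into a clique on top of $P \cup O$ yields a 3-box representation of that structure; combining it with a 2-box representation of $H$ through a single shared separating dimension, in the spirit of the proof of Theorem~\ref{thm:linex1}, should give boxicity at most $4$ after removing $O(g)$ vertices.
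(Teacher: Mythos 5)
This statement is stated in the paper as an open conjecture, and the paper offers no proof of it; your text is likewise a plan of attack rather than a proof, and it does not close the conjecture. The outer framework is sound and matches what the paper itself would suggest: the induction on $g$ and the Kawarabayashi--Thomassen partition $A,P,X$ with $|X|=O(g)$ reduce everything to showing that $G[A\cup P]$ has boxicity at most $3$, exactly as in the proof of Theorem~\ref{thm:linex1}. But the core step is left open, and you say so yourself: you would need a strengthening of Theorem~\ref{thm:tho45v2} that provides, simultaneously at \emph{every} vertex of the outerface $O$, an empty three-dimensional region in which to attach path endpoints. No such result is proved here or in \cite{Tho86}, and producing it is precisely the technical heart of the problem, not a routine extension.

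There is a second, unacknowledged gap in the routing step. The segment construction of Theorem~\ref{thm:6path} crucially uses that the five routed vertices have degree exactly $2$ in $G$, so each segment only needs to meet its two path-neighbours. In $G[A\cup P]$ the situation is reversed: the Kawarabayashi--Thomassen decomposition bounds the number of neighbours that a vertex of $A$ has in $P$ (at most $3$, lying on a short subpath), but an internal vertex of a path $P_i$ may be adjacent to arbitrarily many vertices of $O$. A segment representing such a vertex must intersect all of those boxes and nothing else, which is a far stronger geometric constraint than anything handled by the proof of Theorem~\ref{thm:6path}; ``routing exactly as in that proof'' does not apply. Finally, your fallback claim that boxicity $4$ after deleting $O(g)$ vertices ``should be within reach'' by combining Lemma~\ref{lem:tech} with a $2$-box representation of $H$ also has a hole: a general planar graph does not admit a strict $2$-box representation (Theorem~\ref{thm:plasep} requires it to be a proper subgraph of a $4$-connected triangulation), and recovering the third dimension for the triangle interiors is exactly what forces the count up to $5$ in Theorem~\ref{thm:linex1}. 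In short, the reduction is correct, the obstacles are correctly located, but the statement remains unproved.
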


Note that the linear bound (in $g$) would be best possible, since
there are toroidal graphs with boxicity 4 (for example $K_8$ minus a perfect
matching, see~\cite{EJ13}), and the disjoint union of $\Omega(g)$ such
graphs can be embedded in a surface of Euler genus $g$.


\begin{thebibliography}{99}



\bibitem{ACS14} A. Adiga, L.S.~Chandran and N.~Sivadasan, \emph{Lower
    bounds for boxicity}, Combinatorica {\bf 34(6)} (2014),
631--655.



\bibitem{Esp16} L. Esperet, \emph{Boxicity and topological invariants},
European J. Combin. {\bf 51} (2016), 495--499.

\bibitem{EJ13} L. Esperet and G. Joret, \emph{Boxicity of graphs on surfaces},
  Graphs Combin. {\bf 29(3)} (2013), 417--427.

\bibitem{EO} L. Esperet and P. Ochem, \emph{On circle graphs with girth
    at least five}, Discrete Math. {\bf 309(8)}
  (2009), 2217--2222.


\bibitem{GGH01} A.~Galluccio, L.A.~Goddyn, and P.~Hell,
  \emph{High-Girth Graphs Avoiding a Minor are Nearly Bipartite}, J. Combin. Theory. Ser. B {\bf 83(1)} (2001), 1--14.

\bibitem{KM10} K.~Kawarabayashi and B.~Mohar, \emph{Star Coloring and
    Acyclic Coloring of Locally Planar Graphs}, SIAM J. Discrete
  Math. {\bf 24} (2010), 56--71.

\bibitem{KT12} K.~Kawarabayashi and C.~Thomassen, \emph{From the plane
  to higher surfaces}, J. Combin. Theory Ser. B {\bf 102} (2012),
  852--868.



\bibitem{LPS88} A. Lubotzky, R. Phillips, and P. Sarnak,
  \emph{Ramanujan graphs}, Combinatorica {\bf 8(3)} (1988), 261--277.

  \bibitem{MoTh} B.~Mohar and C.~Thomassen, \emph{Graphs on Surfaces}.
  Johns Hopkins University Press, Baltimore, 2001.

\bibitem{NO15} J. Ne\v{s}et\v{r}il and P. Ossona de Mendez, \emph{A
    Note on Circular Chromatic Number of Graphs with Large Girth and
    Similar Problems}, J. Graph Theory {\bf 80(4)} (2015), 268--276.





\bibitem{Rob69} F.S.~Roberts, \emph{On the boxicity and cubicity of a
  graph}, In: {\em Recent Progresses in Combinatorics}, Academic
  Press, New York, 1969, 301--310.



\bibitem{S93} A.~Schrijver, 
     \emph{Graphs on the torus and geometry of numbers},
      J.~Combin.\ Theory Ser.~B~{\bf 58(1)} (1993), 147--158.



\bibitem{Tho86} C.~Thomassen, \emph{Interval representations of planar
  graphs}, J.~Combin.\ Theory Ser.~B~{\bf 40} (1986), 9--20.

\bibitem{Tho93} C.~Thomassen, \emph{Five-coloring maps on surfaces}, J.~Combin.\ Theory Ser.~B~{\bf 59} (1993), 89--105.


\bibitem{Yu97} X.~Yu, \emph{Disjoint paths, planarizing cycles, and
     spanning walks}, Trans. Amer. Math. Soc. {\bf 349(4)} (1997), 1333--1358.

\end{thebibliography}
\end{document}